\documentclass[DIV=13,twoside]{scrartcl}
\usepackage{fixltx2e} 
\usepackage{cmap} 
\usepackage{ifthen}
\usepackage[T1]{fontenc}
\usepackage[utf8]{inputenc}
\usepackage{amsmath}
\setcounter{secnumdepth}{5}
\usepackage{textcomp} 


\usepackage{amsthm,datetime,scrpage2}

\DeclareMathOperator{\diam}{diam}
\DeclareMathOperator{\dist}{dist}
\DeclareMathOperator{\id}{id}
\DeclareMathOperator{\Fix}{Fix}
\DeclareMathOperator{\const}{const}

\renewcommand{\Im}{\operatorname{Im}}
\renewcommand{\Re}{\operatorname{Re}}
\newcounter{statement}

\newtheorem*{theorem*}{Theorem}
\newtheorem*{maintheorem*}{Main Theorem}
\newtheorem{lemma}[statement]{Lemma}
\newtheorem*{lemma*}{Lemma}
\newtheorem{corollary}[statement]{Corollary}
\newtheorem*{corollary*}{Corollary}

\newtheorem*{conjecture*}{Conjecture}
\theoremstyle{definition}
\newtheorem*{definition}{Definition}
\theoremstyle{remark}
\newtheorem*{remark*}{Remark}

\DeclareUnicodeCharacter{012D}{\u\i}
\DeclareUnicodeCharacter{2254}{\coloneqq}
\makeatletter

\makeatother
\newcommand{\DUtitlenote}[1]{}
\AtBeginDocument{\date{\today\ \currenttime}}
\pagestyle{scrheadings}
\ohead{\headmark}
\ihead{Bounded limit cycles}
\chead{}
\automark[subsection]{section}

\usepackage{amsfonts}
\usepackage{amssymb}
\usepackage{braket}
\usepackage{typearea}
\usepackage{mathtools}


\providecommand{\DUadmonition}[2][class-arg]{%
  \ifcsname DUadmonition#1\endcsname%
    \csname DUadmonition#1\endcsname{#2}%
  \else
    \begin{center}
      \fbox{\parbox{0.9\textwidth}{#2}}
    \end{center}
  \fi
}
\providecommand*{\DUfootnotemark}[3]{%
  \raisebox{1em}{\hypertarget{#1}{}}%
  \hyperlink{#2}{\textsuperscript{#3}}%
}
\providecommand{\DUfootnotetext}[4]{%
  \begingroup%
  \renewcommand{\thefootnote}{%
    \protect\raisebox{1em}{\protect\hypertarget{#1}{}}%
    \protect\hyperlink{#2}{#3}}%
  \footnotetext{#4}%
  \endgroup%
}


\providecommand*{\DUtitle}[2][class-arg]{%
  \ifcsname DUtitle#1\endcsname%
    \csname DUtitle#1\endcsname{#2}%
  \else
    \smallskip\noindent\textbf{#2}\smallskip%
  \fi
}

\providecommand{\DUtopic}[2][class-arg]{%
  \ifcsname DUtopic#1\endcsname%
    \csname DUtopic#1\endcsname{#2}%
  \else
    \begin{quote}#2\end{quote}
  \fi
}

\ifthenelse{\isundefined{\hypersetup}}{
  \usepackage[colorlinks=true,linkcolor=blue,urlcolor=blue,colorlinks=false,unicode]{hyperref}
  \urlstyle{same} 
}{}
\hypersetup{
  pdftitle={Bounded limit cycles of polynomial foliations of ℂP²},
  pdfauthor={Nataliya Goncharuk;Yury Kudryashov}
}

\title{\phantomsection%
  Bounded limit cycles of polynomial foliations of $\mathbb{C}P^{2}$%
  \label{bounded-limit-cycles-of-polynomial-foliations-of-p2}}
\author{Nataliya Goncharuk\thanks{The research was supported by Russian President grant No. MD-2859.2014.1 and by Simons Foundation.} \thanks{Research of both authors was supported by RFBR project 13-01-00969-a, and UNAM-DGAPA-PAPIIT Projects IN 103914 and IN 102413.} \thanks{Address (both authors): National Research University Higher School of Economics, Russia, Moscow, Myasnitskaya str., 20} \and
Yury Kudryashov\thanks{This research carried out in 2014–2015 was supported by “The National Research University ‘Higher School of Economics’ Academic Fund Program” grant No. 14-01-0193.
}}

\begin{document}
\maketitle
\begin{abstract}
In this article we prove in a new way that a generic polynomial vector field in $\mathbb{C}^{2}$ possesses countably many homologically independent limit cycles. The new proof needs no estimates on integrals, provides thinner exceptional set for quadratic vector fields, and provides limit cycles that stay in a bounded domain.
\end{abstract}


\phantomsection\label{contents}
\pdfbookmark[1]{Contents}{contents}
\tableofcontents

\section{Introduction%
  \label{introduction}%
}

Consider a~polynomial differential equation in $\mathbb{C}^{2}$ (with complex time),
\begin{equation}
\begin{aligned}
        {\dot{x}} &= P(x,y),\\
        {\dot{y}} &= Q(x,y),
\end{aligned}
\phantomsection
\label{polynomial-vf}
\end{equation}
where $\max (\deg P, \deg Q) =n$. The splitting of $\mathbb{C}^{2}$ into trajectories of this vector field defines a singular analytic foliation of $\mathbb{C}^{2}$.
Denote by $\mathcal{A}_{n}$ the space of foliations of $\mathbb{C}^{2}$ defined by vector fields \eqref{polynomial-vf} of degree at most $n$ with coprime $P$ and $Q$. Two vector fields define the same foliation if they are proportional, hence $\mathcal{A}_{n}$ is a Zariski open subset of the projective space. $\mathcal{A}_{n}$ is equipped with a natural topology induced from this projective space.

In 1970-s there appeared several results on the properties of generic foliations from $\mathcal{A}_{n}$. In particular, Yu. Ilyashenko \cite{Il78} proved that a generic foliation (more precisely, each foliation from some full Lebesgue measure subset of $\mathcal{A}_{n}$) has an infinite number of limit cycles. Later his theorem was improved by E. Rosales-González, L. Ortiz-Bobadilla and A. Shcherbakov \cite{SRO98}, namely they replaced a full-measure set by an open dense subset.
\begin{definition}
\emph{Limit cycle} on a leaf $L$ of~an~analytic foliation is an element $[\gamma ]$ of the free homotopy group of $L$ such that the holonomy along (any) its representative $\gamma $ is non-identical.
\end{definition}
\begin{definition}
A~set of limit cycles of a~foliation is called \emph{homologically independent}, if for any leaf $L$ all the cycles located on this leaf are linearly independent in $H_{1}(L)$.
\end{definition}
\begin{theorem*}[\cite{Il78}]

For $n\geqslant 2$, there exists a full-measure subset of $\mathcal{A}_{n}$, such that each foliation from this subset possesses an infinite number of homologically independent limit cycles.
\end{theorem*}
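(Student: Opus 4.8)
The plan is to study the foliation near the line at infinity $\overline{\mathbb{C}^2}\setminus\mathbb{C}^2 = \mathbb{C}P^1$, which for a generic foliation in $\mathcal{A}_n$ is invariant and carries $n+1$ singular points. Near each such singular point the foliation has a nonresonant complex hyperbolic (saddle) linearization with a characteristic number $\lambda_j$, and for a generic vector field the collection $(\lambda_1,\dots,\lambda_{n+1})$ avoids any prescribed countable set of "bad" configurations (this is the exceptional set). The holonomy of the leaf at infinity is generated by maps $f_1,\dots,f_{n+1}$ whose linear parts are $e^{2\pi i\lambda_j}$, and the key classical fact (Ilyashenko's "density of leaves" / rigidity circle of ideas, and the work of Shcherbakov–Nakai on the group generated by the holonomy maps) is that the pseudogroup generated by these germs is, for generic $\lambda_j$, "large": its orbits accumulate and it contains maps with hyperbolic fixed points arbitrarily close to $0$.

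The core construction of a single limit cycle would then proceed as follows. First I would pick a transversal $\tau$ to the leaf at infinity near one singular point $a_1$ and, using the non-identical holonomy maps in the pseudogroup, produce a loop $\gamma$ on a nearby leaf $L$ whose holonomy is a hyperbolic germ, hence non-identical — this gives a limit cycle in the sense of the first definition. To get \emph{countably many homologically independent} ones, I would iterate: having produced cycles $\gamma_1,\dots,\gamma_k$ on leaves near infinity, I would use the richness of the holonomy pseudogroup to find a new leaf (or the same leaf) and a new loop $\gamma_{k+1}$ whose holonomy is again hyperbolic and which is not expressible via $\gamma_1,\dots,\gamma_k$ in homology — concretely, one controls this by arranging the multipliers (derivatives of the holonomy at the fixed point) to be multiplicatively independent, or by a dimension/transversality count showing the added cycle genuinely increases the rank of the subgroup of $H_1$ it generates. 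The measure-zero exceptional set is exactly the set of parameters for which some finite combinatorial condition on the $\lambda_j$ (needed at stage $k$) fails; taking a countable union over $k$ keeps it measure zero.

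The main obstacle, and the place where real work is needed, is showing that the holonomy pseudogroup at infinity is rich enough — i.e.\ that for a full-measure set of foliations one can simultaneously realize \emph{infinitely many} independent limit cycles rather than just one. This requires: (a) a careful genericity argument that the relevant analytic conditions on $(\lambda_1,\dots,\lambda_{n+1})$ and on the higher-order Taylor coefficients of the holonomy germs cut out only a measure-zero (in the new proof, thinner) set; and (b) a way to certify homological independence of the produced cycles on each leaf, which is delicate because a priori different $\gamma_k$ could lie on different leaves or could become homologous after passing to a nearby leaf. I expect the new proof advertised in the abstract to replace the old integral estimates by a cleaner topological/dynamical argument: track the limit cycles as lying in a fixed bounded polydisc transversal neighborhood of infinity, use the hyperbolicity of the holonomy to get persistence and the explicit multipliers to get independence, and thereby avoid quantitative estimates on periods of Abelian integrals altogether. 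So in my plan the steps are, in order: (1) normalize the foliation near the invariant line at infinity and identify the holonomy generators; (2) prove the pseudogroup is large for generic $\lambda_j$; (3) build one hyperbolic holonomy loop, hence one limit cycle, inside a bounded domain; (4) inductively build the $(k{+}1)$st cycle independent of the previous $k$; (5) assemble the exceptional set as a countable union of measure-zero sets.
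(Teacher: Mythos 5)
Your setup (invariant line at infinity, $n+1$ singular points, holonomy generators with linear parts $e^{2\pi i\lambda_j}$, genericity conditions on the $\lambda_j$, hyperbolic fixed points of holonomy words giving limit cycles, multipliers controlling independence) matches the paper's, but the two places where you write ``real work is needed'' are exactly where the proof lives, and your sketches of them would not go through as stated. First, ``the pseudogroup is large'' does not by itself produce a fixed point in a \emph{prescribed} open set with a \emph{prescribed} bound on the multiplier, and both are needed downstream. The paper gets this from an explicit iterated-function-system construction: one linearizes $\mathbf{M}_1$ (possible since $|\mu_1|\neq 1$ generically), passes to the chart $\zeta=\frac{\log z}{2\pi i}$ where $\mathbf{M}_1$ becomes a translation by $\lambda_1$ and $\mathbf{M}_2$ a near-translation by $\lambda_2$, and uses the density of the group generated by $\lambda_1,\lambda_2$ in $\mathbb{C}/\mathbb{Z}$ together with the non-linearity of $\mathbf{M}_2$ (from $[\mathbf{M}_1,\mathbf{M}_2]\neq\id$) to build finitely many words $f_j=\mathbf{M}_1^{-N}\circ\mathbf{M}_2^{l_j}\circ\mathbf{M}_1^{k_j+N}\circ\mathbf{M}_2$ that uniformly contract a disc $\Delta^+$ into itself while their images cover a smaller disc $\Delta^-$. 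A Hutchinson-type lemma then yields, for every open $U\subset\Delta^-$ and every $\varepsilon>0$, a composition sending $\Delta^+$ into $U$ with derivative less than $\varepsilon$. This is the missing mechanism behind your steps (2)--(3), and note the genericity condition is a single full-measure condition fixed once and for all, not a countable union over stages $k$.

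Second, neither of your proposed certificates of homological independence works as stated. A homology relation among cycles translates into a relation among multipliers only when the cycles are \emph{simple} and \emph{pairwise disjoint}, so that the relation reads $\pm[c_{i_1}]\pm\cdots\pm[c_{i_s}]=0$ with coefficients $\pm1$; and then one does not need multiplicative independence of the multipliers, only the decay $0<|\mu(c_k)|<|\mu(c_1)\cdots\mu(c_{k-1})|$, which makes any such identity of absolute values impossible. Arranging simplicity and disjointness is the genuinely delicate part: the paper devotes two lemmas to it, taking at each stage two contracting compositions with disjoint images, finding among the $2^m$ periodic words of length $m$ one whose orbit avoids a finite bad set (the cross-section points of the previously built cycles and the fixed points of short middle subwords), and then extracting a simple subcycle while retaining control of its multiplier and of where it meets the cross-section. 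Your concern that cycles on different leaves could interfere is resolved by the definition of homological independence (it is imposed leafwise) together with the fact that all cycles project to loops in the infinite line meeting only at the base point, so disjointness reduces to avoiding finitely many points on the cross-section. Without these two ingredients your induction in steps (4)--(5) cannot close.
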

\begin{theorem*}[\cite{SRO98}]

For $n\geqslant 3$, there exists an open dense subset of $\mathcal{A}_{n}$, such that each foliation from this subset possesses an infinite number of homologically independent limit cycles.
\end{theorem*}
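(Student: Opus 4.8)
\emph{Proof proposal.} The plan is to read the limit cycles off the leaves that accumulate at the line at infinity, through the holonomy of that line. Extend the foliation to $\mathbb{C}P^{2}$. On the open dense subset of $\mathcal{A}_{n}$ cut out by nonvanishing of the degree-$(n+1)$ binary form $x\,Q_{n}(x,y)-y\,P_{n}(x,y)$, where $P_{n},Q_{n}$ are the top-degree homogeneous parts of $P,Q$, together with nondegeneracy and non-resonance of its roots, the line $L_{\infty}=\mathbb{C}P^{1}$ is an invariant leaf carrying exactly $n+1$ singular points $p_{0},\dots,p_{n}$, each hyperbolic (linearizable, multiplier off the unit circle). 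Deleting them gives a leaf $L_{\infty}^{\circ}$ with fundamental group free on $n$ generators, so the holonomy group $G\subset\mathrm{Diff}(\mathbb{C},0)$ along a transversal $T$ through a basepoint (meeting $L_{\infty}$ at $0$) is generated by hyperbolic germs $g_{0},\dots,g_{n}$ subject to $g_{0}\cdots g_{n}=\id$. The governing dictionary: any $h\in G$ realized by a loop $\gamma_{h}$ on $L_{\infty}^{\circ}$, together with $z^{\ast}\in T\setminus\{0\}$ with $h(z^{\ast})=z^{\ast}$ and $h'(z^{\ast})\neq 1$, yields a limit cycle — lift $\gamma_{h}$ through $z^{\ast}$ to a loop $\delta$ on the leaf $L_{z^{\ast}}$; the holonomy of $[\delta]$ is conjugate to the germ of $h$ at $z^{\ast}$, hence non-identical, and $[\delta]\neq 0$ in $H_{1}(L_{z^{\ast}})$ since commutators in $\mathrm{Diff}(\mathbb{C},z^{\ast})$ are tangent to the identity whereas $h'(z^{\ast})\neq1$. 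Two such data give cycles on one leaf only when their fixed points lie in a single $G$-orbit of $T$, and cycles on pairwise distinct leaves are homologically independent by definition; hence it suffices to produce, for a generic foliation, infinitely many elements of $G$ with hyperbolic fixed points on $T\setminus\{0\}$ in pairwise distinct $G$-orbits.

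Next, genericity of a rich holonomy group. Nontriviality of a fixed finite word in $g_{0},\dots,g_{n}$ is an open condition depending continuously on the foliation (some Taylor coefficient of the corresponding germ is nonzero), so $G$ being non-abelian — witnessed by a single commutator $[g_{i},g_{j}]\neq\id$ — is open and dense for $n\geqslant 2$. For $n\geqslant 3$ the extra generators let one promote this to: $G$ is non-solvable on an open dense subset; density is a general-position argument, and openness comes from exhibiting a finite word whose nontriviality is incompatible with the classification of solvable subgroups of $\mathrm{Diff}(\mathbb{C},0)$ (these are conjugate into explicit metabelian models). This is exactly where $n\geqslant 3$ enters: for $n=2$ one only recovers a dense (or full-measure) set, since solvable holonomy can persist on an open set. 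I will use the structure theory of solvable and non-solvable subgroups of $\mathrm{Diff}(\mathbb{C},0)$ — in particular density of the hyperbolic fixed points of a non-solvable group — as a black box.

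The mechanism producing hyperbolic fixed points away from $0$. Fix a hyperbolic $f\in G$ with multiplier $\mu$, $0<|\mu|<1$, and pass to its linearizing coordinate $w$, so $f(w)=\mu w$ on a disc $D_{r_{0}}$. Since all generators are hyperbolic, any non-commuting pair is a pair of hyperbolic germs, and combining such a pair with $f$ gives $g\in G$ with $g'(0)\neq 1$ that does not commute with $f$, hence is nonlinear in the $w$-coordinate: $g(w)=b_{1}w+b_{k}w^{k}+\dots$ with $b_{1}\neq1$ and $b_{k}\neq0$ for some $k\geqslant2$. Conjugating, $h_{N}:=f^{N}\circ g\circ f^{-N}$ is defined on $D_{|\mu|^{N}r_{0}}$ and equals $b_{1}w+b_{k}\mu^{-N(k-1)}w^{k}+\dots$; the amplified coefficient drives a second solution of $h_{N}(w)=w$ to a point $w_{N}^{\ast}$ of size comparable to $|\mu|^{N}$, at which $h_{N}'(w_{N}^{\ast})\to k-(k-1)b_{1}\neq1$. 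Pre-composing with powers of hyperbolic elements whose multipliers have absolute values generating a dense subgroup of $\mathbb{R}_{>0}$ — available generically — one tunes the linear part so that $w_{N}^{\ast}$ falls inside the domain of $h_{N}$; this produces one $h\in G$ with a hyperbolic fixed point $z^{\ast}\in T\setminus\{0\}$, and conjugating by powers of $f$ reproduces such points at every scale.

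To finish, one must spread the fixed points over infinitely many distinct leaves. The rescaled points $\mu^{N}z^{\ast}$ all lie on a single leaf, so homological independence forces varying the combinatorics: feeding the construction with an infinite supply of ``independent'' elements $g$ from the non-solvable structure (equivalently, using density of $G$'s hyperbolic fixed points and a Baire/countability argument to choose each new fixed point off the finitely many $G$-orbits already used) yields $z_{1}^{\ast},z_{2}^{\ast},\dots\to0$ in pairwise distinct orbits, hence infinitely many homologically independent limit cycles; one then checks that finitely many robust witnesses — one hyperbolic $f$, one non-commuting $g$, one near-unit multiplier — suffice for the whole chain, so the conditions used carve out an open set. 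The main obstacle is precisely this last part: the bookkeeping of domains of definition while amplifying Taylor coefficients, and the passage from one limit cycle to infinitely many on distinct leaves. This is where the classical arguments expend delicate estimates, and presumably where the present paper finds a shortcut — at the price, or with the dividend, of relocating the cycles into a bounded domain.
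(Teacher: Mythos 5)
Your proposal follows the classical Shcherbakov--Nakai route (amplify the nonlinearity of a non-solvable holonomy group by conjugating with a contracting hyperbolic element, producing hyperbolic fixed points accumulating to $0$), whereas the paper builds an iterated function system of contracting monodromy maps in a Fatou coordinate of a parabolic commutator and never needs to locate individual fixed points. The difference of route would be acceptable, but your treatment of homological independence has a genuine gap. You reduce independence to producing fixed points ``in pairwise distinct $G$-orbits'', asserting that two cycles lie on one leaf only when their base points lie in a single $G$-orbit. That implication is backwards: the monodromy pseudogroup of $L_\infty$ records only lifts of loops contained in $L_\infty$, while a leaf is a global object that can return to the cross-section $S$ along paths far from the infinite line; so distinct $G$-orbits do not guarantee distinct leaves, and if two of your cycles land on the same leaf you have no independence argument at all. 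Even the selection step is unjustified: the set of hyperbolic fixed points of elements of $G$ is a countable union of finite sets, hence countable, so ``a dense set minus finitely many countable orbits is nonempty'' is not a cardinality or Baire argument one can actually make. This is precisely the difficulty that forced \cite{Il78,SRO98} into pages of integral estimates, and that the present paper resolves differently: it permits all cycles to lie on one leaf and derives linear independence in $H_1(L)$ from the multipliers alone (Lemma 9: simple, pairwise disjoint cycles with $|\mu(c_j)|<|\mu(c_1)\cdots\mu(c_{j-1})|$ are independent), arranging simplicity and disjointness by forcing the orbits to avoid explicit finite exceptional sets (Lemmas 10--11). Your proposal contains no substitute for this step.

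Two secondary points. First, the fixed-point mechanism itself is only sketched where it is hardest: whether $w_N^{*}$ actually lies in the shrinking domain $D_{|\mu|^N r_0}$ of $h_N$ depends on the ratio $|(1-b_1)/b_k|^{1/(k-1)}$ versus $r_0$, and ``tuning the linear part'' by precomposition changes $b_k$ and the higher terms as well as $b_1$; you acknowledge this bookkeeping is the crux, but it is exactly what a proof must supply. Second, openness of the genericity condition cannot be taken as ``non-solvability is open'' (a countable union of open conditions); one must, as the paper does, isolate finitely many explicit conditions --- here $[\mathbf{M}_i,\mathbf{M}_j]\in A_1$ together with distinctness of the quantities $S(\mathbf{M}_i)(0)/(\mathbf{M}_i'(0)^2-1)$ --- whose failure locus is a proper real-analytic subset; note also that these conditions are already open and dense for $n=2$, so your explanation of why $n\geqslant 3$ appears in the classical statement is not the right one.
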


The proof of the first theorem in \cite{Il78} is rather technical; the proof of the second one in \cite{SRO98} contains about 10 pages of cumbersome estimates of integrals along the limit cycles. The constructed sequence of representatives $\gamma _j$ of required limit cycles $[\gamma _j]$ in both theorems converges to the infinite line.

Our results yield another, less technical proof of these theorems, and our limit cycles are detached from the infinite line. Also, our proof works for $n=2$ for both types of genericity assumptions.
\begin{maintheorem*}[{}]\phantomsection\label{main-theorem}
For $n\geqslant 2$, there exist
\begin{itemize}

\item a full-measure subset $\mathcal{A}^{LC1}_n\subset \mathcal{A}_n$,

\item a complement to a real-analytic subset $\mathcal{A}^{LC2}_n\subset \mathcal{A}_n$,

\end{itemize}

such that each $\mathcal{F}\in \mathcal{A}_n^{LC1}\cup \mathcal{A}_n^{LC2}$ possesses an infinite sequence of limit cycles $[\gamma _j]$ such that:
\newcounter{listcnt0}
\begin{list}{\alph{listcnt0})}
{
\usecounter{listcnt0}
\setlength{\rightmargin}{\leftmargin}
}

\item the cycles are homologically independent;

\item the multipliers of the cycles tend to zero;

\item the cycles are uniformly bounded, i.e., there exists a ball in $\mathbb{C}^{2}$ that includes all representatives $\gamma _j$;

\item there exists a cross-section such that $\gamma _j$ intersect it in a dense subset.
\end{list}

The explicit descriptions of the sets $\mathcal{A}^{LC1}_n$ and~$\mathcal{A}^{LC2}_n$ are given below, in Sections “\hyperref[multiplicative-density]{Multiplicative density}” and “\hyperref[unsolvable-monodromy-group]{Unsolvable monodromy group}”, respectively.
\end{maintheorem*}

The key genericity assumption for $\mathcal{A}^{LC1}_n$ is that the characteristic numbers of two singular points at infinity generate a dense semi-group in $\mathbb{C}/\mathbb{Z}$. The key genericity assumption for $\mathcal{A}^{LC2}_n$ is that the monodromy group at infinity is unsolvable.

Though the exceptional set in the second part is much thinner, we include the first part for two reasons: $\mathcal{A}^{LC2}_n$ does not include $\mathcal{A}^{LC1}_n$, and the first case is technically easier.

Our construction also yields that the infinite number of limit cycles survives in a neighbourhood of $\mathcal{A}^{LC1}_n\cup \mathcal{A}^{LC2}_n$ in the space $\mathcal{B}_{n+1}$ of foliations of $\mathbb{C}P^{2}$ that are given by a~polynomial vector field of degree at~most $n+1$ in any affine chart, see \hyperref[cor-nbhd]{Corollary 6}.

\section{Preliminaries%
  \label{preliminaries}%
}

\subsection{Extension to infinity%
  \label{extension-to-infinity}%
}

Let us extend a polynomial foliation $\mathcal{F}\in \mathcal{A}_n$ given by \eqref{polynomial-vf} to $\mathbb{C}P^{2}$. After changing variables, $u=\frac 1 x, v = \frac y x$, and the time change $d\tau  = -u^{n-1} dt$, the vector field takes the form
\begin{equation}
\begin{aligned}
        {\dot{u}} &= u {\widetilde{P}} (u,v)\\
        {\dot{v}} &= v{\widetilde{P}} (u,v) - {\widetilde{Q}}(u,v)
\end{aligned}
\phantomsection
\label{vf-uv}
\end{equation}
where ${\widetilde{P}} (u,v)= P\left(\frac 1 u, \frac v u\right) u^n$ and ${\widetilde{Q}} (u,v)= Q\left(\frac 1 u,\frac v u\right) u^n$ are two polynomials of degree at most $n$.

Since ${\dot{u}}(0, v)\equiv 0$, the infinite line $\set{u=0}$ is invariant under this vector field. Denote by $h(v)$ the polynomial ${\dot{v}}(0, v)=v{\widetilde{P}} (0,v) - {\widetilde{Q}}(0,v)$. In a generic (\emph{non-dicritical}) case $h(v)\not\equiv 0$; then \eqref{vf-uv} has isolated singular points $a_j\in \set{u=0}$ at the roots of $h$, and $L_\infty  ≔ \set{u=0}\smallsetminus \{a_{1},a_{2},\ldots  \}$ is a leaf of the extension of $\mathcal{F}$ to $\mathbb{C}P^{2}$.

Denote by $\mathcal{A}_{n}'$ the set of foliations $\mathcal{F}\in \mathcal{A}_{n}$ such that $h$ has $n+1$ distinct roots $a_j$, $j=1,\ldots , n+1$. In particular, all these foliations are non-dicritical.

For each $j$, let $\lambda _j$ be the ratio of the eigenvalues of the linearization of \eqref{vf-uv} at $a_j$ (the eigenvalue corresponding to $L_\infty $ is in the denominator). One can show that $\sum \lambda _j=1$, and this is the only relation on $\lambda _j$.

For $\mathcal{F}\in \mathcal{A}_n'$, fix a non-singular point $O\in L_\infty $ and a cross-section $S$ at $O$ given by $v=\const$. Let $\Omega L_\infty $ be the loop space of $(L_\infty , O)$, i.~e., the space of all continuous maps $(S^{1}, pt)\rightarrow (L_\infty , O)$. For a loop $\gamma \in \Omega L_\infty $, denote by $\mathbf{M}_\gamma :(S, O)\rightarrow (S, O)$ the monodromy map along $\gamma $. It is easy to see that $\mathbf{M}_\gamma $ depends only on the class $[\gamma ]\in \pi _{1}(L_\infty , O)$, and the map $\gamma \mapsto \mathbf{M}_\gamma $ reverses the order of multiplication,
\begin{equation*}
\mathbf{M}_{\gamma \gamma '}=\mathbf{M}_{\gamma '}\circ \mathbf{M}_\gamma .
\end{equation*}
The set of all possible monodromy maps $\mathbf{M}_\gamma $, $\gamma \in \Omega L_\infty $, is called the \emph{monodromy pseudogroup} $G=G(\mathcal{F})$.
The word “pseudogroup” means that there is no common domain where all elements of $G$ are defined. However we will follow the tradition and write “monodromy group” instead of “monodromy pseudogroup”.

Choose $n+1$ loops $\gamma _j\in \Omega L_\infty $, $j = 1,2,\ldots ,n+1$, passing around points $a_j$, respectively. We suppose that $\gamma _j$ are simple and intersect only at $O$. Then the pseudogroup $G(\mathcal{F})$ is generated by monodromy maps $\mathbf{M}_j=\mathbf{M}_{\gamma _j}$. It is easy to see that the multipliers $\mu _j=\mathbf{M}_j'(0)$ are equal to $\exp {2 \pi  i \lambda _j}$.

\subsection{Fatou coordinates%
  \label{fatou-coordinates}%
}

The space of germs of analytic parabolic maps $g:(\mathbb{C}, 0)\rightarrow (\mathbb{C}, 0)$, $z\mapsto z+o(z)$, has a~natural filtration by the degree of the leading term of $g(z)-z$. Denote by $A_p$ the set of germs of the form $z\mapsto z+az^{p+1}+o(z^{p+1})$, $a\neq 0$.

In this section we will recall some results on sectorial rectifying charts of parabolic fixed points that will be used in the article. For a~more complete exposition, see, e.g., Chapter IV of \cite{IYa07}.

We start with describing the formal normal forms for quadratic parabolic germs.
\begin{theorem*}[{}]
A~quadratic parabolic germ $f:z\mapsto z+az^{2}+bz^{3}+o(z^{3})$ is formally conjugated to the time-one flow map~$f_\lambda $ of the vector field~$v_\lambda (z) = \frac{z^{2}}{1+\lambda  z}$, where~$\lambda =1-\frac{b}{a^{2}}$. More precisely, there exists a~formal series $H(z)=az+\sum _{k=2}^\infty  h_kz^k$, such that $f_\lambda \circ H=H\circ f$. The series~$H$ is uniquely defined modulo a~formal composition with a~flow map of~$v_\lambda $.
\end{theorem*}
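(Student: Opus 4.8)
The plan is to normalize the problem in two stages: first reduce a general quadratic parabolic germ $f\in A_1$ to a model whose $2$-jet is $z\mapsto z+z^2$, then determine which formal change of variables conjugates this to the time-one map $f_\lambda$ of $v_\lambda(z)=z^2/(1+\lambda z)$, and finally identify the ambiguity. For the first step, note that a linear scaling $z\mapsto \alpha z$ replaces $a$ by $a/\alpha$ and $b$ by $b/\alpha^2$; choosing $\alpha=a$ sends $f$ to a germ with $a=1$ and with cubic coefficient $b/a^2$. So without loss of generality we may assume $f(z)=z+z^2+cz^3+o(z^3)$ with $c=b/a^2$, and the claim becomes that $f$ is formally conjugate, by a series tangent to the identity, to $f_\lambda$ with $\lambda=1-c$. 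The linear factor $a$ in $H$ is exactly this preliminary scaling.

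The core is the standard term-by-term construction of the conjugating series. First I would compute the time-one flow map of $v_\lambda$: solving $\dot z = z^2/(1+\lambda z)$, i.e. $(1/z^2+\lambda/z)\,\dot z=1$, gives $-1/z+\lambda\log z = t+\mathrm{const}$, and expanding the time-one map one finds $f_\lambda(z)=z+z^2+(1-\lambda)z^3+o(z^3)$; this pins down $\lambda=1-c$ so that the $3$-jets of $f$ and $f_\lambda$ already agree after the normalization. (The invariant $\lambda$ is the \emph{formal modulus} — it is a formal conjugacy invariant, so no other choice could work.) Then I would build $H(z)=z+\sum_{k\geqslant 2}h_k z^k$ (after the scaling, with leading coefficient normalized) inductively: assuming $H_{m-1}$ conjugates $f$ to something agreeing with $f_\lambda$ up to order $z^m$, the next coefficient $h_m$ is determined by a linear equation of the form $h_m\cdot(\text{something})=(\text{discrepancy})$, where the ``something'' is the difference of the multipliers' contributions. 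Here the crucial point is the \emph{homological equation}: conjugating a parabolic germ, the coefficient $h_m$ multiplies a factor that vanishes at precisely one order (the one matching the formal normal form's free parameter / the flow direction), which is why $H$ is unique only modulo precomposition with the flow of $v_\lambda$.

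More precisely, I would phrase the induction as follows: write $f=F_0\circ(\mathrm{id}+\text{higher order})$ and compare $H\circ f$ with $f_\lambda\circ H$ order by order. At order $z^{m+1}$ (for $m\geqslant 2$), the equation for $h_m$ reads, schematically, $(m-1)\,h_m - (m-1)\,h_m + \dots$, and one checks that the coefficient of $h_m$ is $0$ exactly when $m=2$, i.e. the obstruction/freedom sits in the quadratic term of $H$ beyond the already-fixed leading coefficient — wait, more carefully, the kernel of the homological operator for a germ in $A_1$ is one-dimensional and corresponds to the vector field direction $v_\lambda$ itself. So for each $m$ outside that kernel the coefficient $h_m$ is uniquely solvable, and the one free direction is exactly the freedom of composing with $g^s_{v_\lambda}$, the time-$s$ flow of $v_\lambda$, which is itself tangent to the identity and commutes with $f_\lambda$. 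This gives both existence (solve the triangular system) and the stated uniqueness ``modulo composition with a flow map of $v_\lambda$''.

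The main obstacle, and the place where care is needed, is the bookkeeping in the homological equation: one must verify that the linear operator $h\mapsto h\circ f_\lambda - f_\lambda\circ h$ acting on formal series tangent to the identity, restricted to each graded piece, is invertible except on the single graded piece corresponding to the derivative of the flow, and that this surviving one-parameter family is genuinely realized by the flow maps $g^s_{v_\lambda}$ (so that the ambiguity is no larger and no smaller than claimed). Concretely this reduces to checking that $\frac{d}{ds}\big|_{s=0} g^s_{v_\lambda} = v_\lambda$ as a formal vector field solves the linearized conjugacy equation, which is immediate since flows commute with their time-one maps, and that nothing else does, which is the computation that $v_\lambda$ is the unique (up to scalar) formal vector field commuting with $f_\lambda$. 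Everything else is the routine inductive solve; I would present the flow-map computation and the kernel computation in detail and leave the triangular induction to the reader.
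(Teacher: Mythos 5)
The paper states this theorem without proof, as standard background material (it refers the reader to Chapter~IV of \cite{IYa07}), so there is no in-paper argument to compare against; your sketch --- linear rescaling to normalize the quadratic coefficient to $1$, expansion of the time-one flow of $v_\lambda$ to get $f_\lambda(z)=z+z^2+(1-\lambda)z^3+O(z^4)$ and hence $\lambda=1-b/a^2$, and the order-by-order homological equation whose coefficient of $h_m$ is $m-2$ and so degenerates only at $m=2$, giving the one-parameter ambiguity realized by the flow maps of $v_\lambda$ --- is exactly the standard proof and is correct. The only blemish is the momentary mislabelling of the ambiguity as \emph{pre}composition: since the flow maps $g^s_{v_\lambda}$ commute with $f_\lambda$, the freedom in $H$ is \emph{post}composition with $g^s_{v_\lambda}$ (precomposition would be with the centralizer of $f$, i.e.\ with $H^{-1}\circ g^s_{v_\lambda}\circ H$), which you in fact use correctly in the final paragraph.
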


\DUadmonition[note]{
\DUtitle[note]{Note}

It is easy to see that the map $t_\lambda :z\mapsto -\frac 1z+\lambda \log z$ conjugates $f_\lambda $ to the map $t\mapsto t+1$, $t_\lambda (f_\lambda (z))=t_\lambda (z)+1$.
}

We will need the following theorem that describes sectorial rectifying charts for quadratic parabolic germs. Consider the following sectors
\begin{equation*}
\begin{aligned}
        S_{\alpha ,r}^{+}&=\Set{z|\relax |z|<r, |\arg z|<\alpha },&S_{\alpha ,r}^{-}&=\Set{z|\relax |z|<r, |\arg z-\pi |<\alpha }.
\end{aligned}
\end{equation*}\begin{theorem*}[{Sectorial Normalization Theorem}]\phantomsection\label{sectorial-normalization-theorem}
Let $f:z\mapsto z+az^{2}+o(z^{2})$ be a~quadratic parabolic map, let $H(z)=az+\sum _{k=2}^\infty  h_kz^k$ be a~formal series which conjugates $f$ to its formal normal form $f_\lambda $. Then for any $\frac \pi 2<\alpha <\pi $ there exists $r>0$ and a~unique couple of~analytic maps $h^\pm :\frac{1}{a}S^\pm _{\alpha ,r}\rightarrow \mathbb{C}$ with the following properties:
\begin{itemize}

\item $H$ is an~asymptotic series for $h^{-}$ and $h^{+}$: for $N\in \mathbb{N}$, we have $h^\pm (z)=az+\sum _{k=2}^N h_kz^k+o(z^N)$ as $z\rightarrow 0$ inside $\frac 1aS^\pm _{\alpha ,r}$;

\item $h^\pm $ conjugates $f$ to $f_\lambda $: $f_\lambda \circ h^\pm =h^\pm \circ f$.

\end{itemize}
\end{theorem*}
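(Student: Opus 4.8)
The plan is to reduce everything to the classical Leau--Fatou theory in the chart $w=-1/z$, and then to compare the Fatou coordinate of $f$ with the explicit one of the model $f_\lambda$. First, conjugating by the linear map $z\mapsto az$ one may assume $a=1$: the rescaling accounts both for the factor $\tfrac1a$ in the domains and for the leading coefficient of $H$, so in the new variable $H(z)=z+\sum_{k\ge 2}h_kz^k$ and the domains become $S^\pm_{\alpha,r}$ (with $r$ possibly shrunk). Now pass to $w=-1/z$. Since $\tfrac\pi2<\alpha<\pi$, the sectors $S^\pm_{\alpha,r}$ become sectorial neighbourhoods $\Pi^\pm$ of $w=\infty$ around the positive, resp.\ negative, real axis, of angular opening $2\alpha>\pi$; that this opening is \emph{strictly} larger than a half-plane is the geometric fact on which the whole construction rests. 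A direct computation gives, in the $w$-chart, $\tilde f(w)=w+1+\lambda w^{-1}+O(w^{-2})$, while by the Note the model $f_\lambda$ has the exact Fatou coordinate $t_\lambda$, which in the $w$-chart reads $w\mapsto w+\lambda\log(-1/w)$ and conjugates $f_\lambda$ to $w\mapsto w+1$.

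Second, I would construct a Fatou coordinate for $\tilde f$ on each $\Pi^\pm$, i.e.\ an analytic solution of $\Phi^\pm\circ\tilde f=\Phi^\pm+1$ of the form $\Phi^\pm(w)=w+\lambda\log(-1/w)+\varphi^\pm(w)$, where $\log$ is the branch induced by $\log z$ on the original sector and $\varphi^\pm(w)\to0$. Substituting $\tilde f$ and expanding the logarithm, the $O(w^{-1})$ terms cancel and the equation becomes the cohomological equation $\varphi^\pm\circ\tilde f-\varphi^\pm=g$ with $g(w)=O(w^{-2})$. One solves this by telescoping along orbits: $\varphi^-=-\sum_{k\ge 0}g\circ\tilde f^{\,k}$ on the attracting side $\Pi^-$ and $\varphi^+=\sum_{k\ge 1}g\circ\tilde f^{\,-k}$ on the repelling side $\Pi^+$. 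Two things must be checked: (i) for every $w\in\Pi^\pm$ the forward, resp.\ backward, $\tilde f$-orbit stays in $\Pi^\pm$ and $\Re\tilde f^{\,\pm k}(w)$ grows like $\Re w\pm k$, which follows from $\tilde f(w)\approx w+1$ together with the wide opening of $\Pi^\pm$; and (ii) the series converge locally uniformly and $\varphi^\pm\to0$, which follows from the bound $|g(\tilde f^{\,\pm k}(w))|\le C(|w|+k)^{-2}$. The same estimates, applied termwise, show that $\varphi^\pm$ has an asymptotic expansion in powers of $w^{-1}$, obtained by formally solving the cohomological equation; reading back in $z$, $\Phi^\pm(z)=-1/z+\lambda\log z+(\text{asymptotic power series in }z)$.

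Third, set $h^\pm:=t_\lambda^{-1}\circ\Phi^\pm$ in the $z$-chart. From $\Phi^\pm\circ f=\Phi^\pm+1$ and $t_\lambda\circ f_\lambda=t_\lambda+1$ one gets immediately $f_\lambda\circ h^\pm=h^\pm\circ f$, so the second required property holds; and since $\Phi^\pm$ differs from $t_\lambda$ by an asymptotic power series in $z$, $h^\pm$ is tangent to the identity and has some asymptotic series $\hat h^\pm$, which, being a formal conjugacy between $f$ and $f_\lambda$, equals $v_\lambda^{\,s^\pm}\circ H$ for a constant $s^\pm$ by the uniqueness clause of the formal normal form theorem. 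Replacing $\Phi^\pm$ by $\Phi^\pm-s^\pm$ post-composes $h^\pm$ with the time-$(-s^\pm)$ flow of $v_\lambda$ and makes its asymptotic series exactly $H$; this pins down the additive constants in the Fatou coordinates, and it is also why the couple $h^\pm$ is unique: if $h_1^\pm,h_2^\pm$ both satisfy the two properties, then $h_2^\pm\circ(h_1^\pm)^{-1}$ is an analytic germ on a sector of opening $>\pi$ commuting with $f_\lambda$, hence, through the Fatou coordinate of $f_\lambda$ (where it becomes a $1$-periodic, one-sidedly bounded perturbation of the identity, so a translation), a flow map $v_\lambda^{\,\tau^\pm}$; comparing asymptotic series, $v_\lambda^{\,\tau^\pm}$ has series $H\circ H^{-1}=\id$, which forces $\tau^\pm=0$.

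The step I expect to be the main obstacle is the geometric bookkeeping of the second paragraph: proving that the sectorial domains $\Pi^\pm$ really are forward- (resp.\ backward-) invariant under $\tilde f$ and that orbits escape to infinity at the uniform rate $\Re w\pm k$, so that the telescoping sums converge locally uniformly on all of $\Pi^\pm$ and define analytic functions with the claimed asymptotics — this is exactly where the hypothesis $\alpha>\tfrac\pi2$ is used. Everything else is either the explicit computation with the model $f_\lambda$ or a formal manipulation with asymptotic series.
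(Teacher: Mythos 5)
The paper does not prove this statement: it is recalled as a classical result with a pointer to Chapter~IV of \cite{IYa07}, so there is no in-paper argument to compare against. Your proposal is the standard Leau--Fatou construction of sectorial Fatou coordinates (pass to $w=-1/z$, peel off the explicit $w+\lambda\log(-1/w)$ part of the model, solve the resulting cohomological equation $\varphi\circ\tilde f-\varphi=g$ with $g=O(w^{-2})$ by telescoping along forward/backward orbits, then transport back through $t_\lambda^{-1}$ and normalize the additive constant via the formal uniqueness of $H$), and it is sound; the uniqueness argument via $1$-periodicity plus flatness of $h_2^\pm\circ(h_1^\pm)^{-1}$ is also the right one.

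One caveat on the step you yourself flag as the crux: the literal claim in (i) that every forward orbit of a point of $\Pi^-$ \emph{stays in} $\Pi^-$ is not quite true. The orbit $w_k$ satisfies $w_k=w_0+k+\lambda\log k+O(1)$, so it can drift across the boundary rays $\arg w=\pm\alpha$ by a logarithmically growing amount and can dip below the inner radius $|w|=1/r$ (down to roughly $|w_0|\sin\alpha$) while $\Re w_k$ changes sign. The construction survives because the orbit stays in a slightly larger truncated sector (opening $2\alpha'$ with $\alpha<\alpha'<\pi$, smaller inner radius), on which the key estimate $|w_k|\geqslant c_{\alpha'}(|w_0|+k)$ still holds with a constant degenerating only as $\alpha'\to\pi$; one then shrinks $r$ at the end to land back in $\frac1aS^\pm_{\alpha,r}$. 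Equivalently, one can run the telescoping on the standard forward-invariant parabolic region $\set{v^2>4R(R-u)}$, which contains every such truncated sector. Either fix is routine, but as written the invariance assertion is the one sentence of the sketch that is false if read literally, so it should be replaced by the nested-sector (or parabolic-region) statement.
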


\DUadmonition[note]{
\DUtitle[note]{Note}

For most parabolic germs $f$, $h^{-}\neq h^{+}$. So, the analytic classification of parabolic germs does not coincide with their formal classification. The analytic classification has functional moduli called \emph{Ecalle–Voronin moduli}, namely the restrictions of $(h^{+})^{-1}\circ h^{-}$ to the sectors $\set{z|\relax |z|<r, \pi -\alpha <\arg z<\alpha }$ and $\set{z|\relax |z|<r, -\alpha <\arg z<\pi +\alpha }$ up to a conjugation by a~flow map of $v_\lambda $.
}

\DUadmonition[note]{
\DUtitle[note]{Note}

It is easy to check that the image of $h^\pm $ includes a~sector of the form $S^\pm _{\alpha ', r'}$ for each $\alpha '<\alpha $ and some $r'$, and is included by another sector of the same form. Also, $t_\lambda (S^{-}_{\alpha ',r'})$ includes a~\emph{sector at~infinity}:
\begin{equation}
S_{\beta ,R}^\infty =\Set{\zeta |\relax |\zeta |>R, |\arg \zeta |<\beta }
\phantomsection
\label{sector-at-infinity}
\end{equation}
for each $\beta <\alpha '$ and some $R=R(\alpha ', r', \lambda , \beta )\gg 1$. Thus the image of $\zeta =t_\lambda \circ h^{-}$ includes a~sector at~infinity.
}
\begin{definition}
A \emph{Fatou coordinate} for a~parabolic map $f$ in a~sector $\frac 1a S^{-}_{\alpha ,r}$ is a~coordinate of the form $\zeta =t_\lambda \circ h^{-}$, where $h^{-}$ is given by \hyperref[sectorial-normalization-theorem]{Sectorial Normalization Theorem}. A~Fatou coordinate $\zeta $ conjugates $f$ to the shift $\zeta \mapsto \zeta +1$ in a domain that includes a sector at infinity \eqref{sector-at-infinity}, and is defined uniquely modulo addition of~a~complex number.
\end{definition}

We shall need the following statement.
\begin{lemma}\phantomsection\label{inzetachart}
Let $g$ be a~parabolic map of the form $z\mapsto z+az^2+\ldots $. Let $\zeta $ be a Fatou chart for~$g$ defined in a~sector $\frac 1aS^{-}_{\alpha ,r}$. Let $S^\infty $ be the image of a smaller sector $\frac 1aS^{-}_{\alpha -\varepsilon ,r-\varepsilon }$ under $\zeta $. Let $F:\mathbb{C}\rightarrow \mathbb{C}$ be an~analytic map, $F(0)=0$, defined in the chart $z$. Let ${\tilde{F}}=\zeta \circ F\circ \zeta ^{-1}$ be the corresponding map in the chart $\zeta $.
\setcounter{listcnt0}{0}
\begin{list}{\alph{listcnt0})}
{
\usecounter{listcnt0}
\setlength{\rightmargin}{\leftmargin}
}

\item If $F(z)=kz+o(z)$, then ${\tilde{F}}(\zeta )=k^{-1}\zeta +c+o(1)$ as $\zeta \rightarrow \infty $ inside $S^\infty $.

\item If~$F(z)=z+kz^{p+1}+ o(z^{p+1})$, $p\geqslant 1$, then ${\tilde{F}}(\zeta )=\zeta +(-1)^{p-1}ka^{-p}\frac{1}{\zeta ^{p-1}} +o\left(\frac 1{\zeta ^{p-1}}\right)$ in $\zeta $-chart as $\zeta \rightarrow \infty $ inside $S^\infty $.

\item If $F$ is a~parabolic map, then $\log {\tilde{F}}'(\zeta )=o(F(\zeta )-\zeta )$ as $\zeta \rightarrow \infty $ inside $S^\infty $.
\end{list}
\end{lemma}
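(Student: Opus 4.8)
The plan is to reduce all three claims to one explicit computation in the normal‑form chart, using the closed formula $t_\lambda(z)=-\tfrac1z+\lambda\log z$. I would put $u=h^{-}(z)$, so that $\zeta=t_\lambda(u)$ and $\tilde F=t_\lambda\circ\hat F\circ t_\lambda^{-1}$, where $\hat F:=h^{-}\circ F\circ(h^{-})^{-1}$ is the map $F$ written in the $u$‑chart. Because $h^{-}(z)=az+O(z^2)$ is a biholomorphism at $0$, conjugation by it fixes the linear part, $\hat F'(0)=F'(0)$, and — the filtration $\{A_p\}$ being invariant under analytic conjugation — it turns a germ $F(z)=z+kz^{p+1}+o(z^{p+1})$ into $\hat F(u)=u+\tfrac{k}{a^{p}}u^{p+1}+O(u^{p+2})$, with the new leading coefficient $k/a^p$ coming from a one‑line chain rule. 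Then I would invert the Fatou coordinate over $S^\infty$: from $\zeta=-\tfrac1u+\lambda\log u$ and the domination $|1/u|\gg|\log u|$ as $u\to0$ inside the sector, one gets $u=-\tfrac1\zeta(1+o(1))$, hence $h^{-}(z)=-\tfrac1\zeta(1+o(1))$, $z=-\tfrac1{a\zeta}(1+o(1))$ and $\log u=-\log\zeta+O(1)$, all \emph{uniformly} on $S^\infty$; this uniformity is exactly what the Sectorial Normalization Theorem and the notes preceding the definition of a Fatou coordinate provide, and it explains why the statement is phrased for the slightly smaller sector $\tfrac1aS^{-}_{\alpha-\varepsilon,\,r-\varepsilon}$.

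For (a) I would expand $t_\lambda(\hat F(u))=-\tfrac1{\hat F(u)}+\lambda\log\hat F(u)=\bigl(-\tfrac1{ku}+O(1)\bigr)+\bigl(\lambda\log u+O(1)\bigr)$; substituting $-\tfrac1u=\zeta-\lambda\log u$ converts $-\tfrac1{ku}$ into $\tfrac\zeta k-\tfrac\lambda k\log u$, so that $\tilde F(\zeta)=k^{-1}\zeta+(\text{lower order})$, and isolating the bounded part of the remainder via $\log u=-\log\zeta+O(1)$ gives the constant $c$. For (b) I would compute the increment instead: writing $\tilde F(\zeta)-\zeta=t_\lambda(\hat F(u))-t_\lambda(u)=\bigl[\tfrac1u-\tfrac1{\hat F(u)}\bigr]+\lambda\log\tfrac{\hat F(u)}{u}$ and using $\hat F(u)-u=\tfrac{k}{a^{p}}u^{p+1}+O(u^{p+2})$, the bracket equals $\tfrac{\hat F(u)-u}{u\,\hat F(u)}=\tfrac{k}{a^{p}}u^{p-1}(1+o(1))$, while $\lambda\log\tfrac{\hat F(u)}{u}=\lambda\log\bigl(1+\tfrac{k}{a^{p}}u^{p}+O(u^{p+1})\bigr)=O(u^{p})=o(u^{p-1})$; substituting $u^{p-1}=(-1)^{p-1}\zeta^{-(p-1)}(1+o(1))$ yields $\tilde F(\zeta)-\zeta=(-1)^{p-1}ka^{-p}\zeta^{-(p-1)}+o\bigl(\zeta^{-(p-1)}\bigr)$, which is the asserted formula (for $p=1$ it reads $\tilde F(\zeta)=\zeta+k/a+o(1)$, matching (a)).

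For (c) I would assume $F\neq\id$, so $F\in A_p$ for some $p\geqslant1$, and differentiate: $\tilde F'(\zeta)=\dfrac{t_\lambda'(\hat F(u))}{t_\lambda'(u)}\,\hat F'(u)$. With $t_\lambda'(u)=\tfrac{1+\lambda u}{u^{2}}$ and $\hat F(u)=u+O(u^{p+1})$ one gets $\dfrac{t_\lambda'(\hat F(u))}{t_\lambda'(u)}=\dfrac{u^{2}}{\hat F(u)^{2}}\bigl(1+O(u^{p+1})\bigr)=1-\tfrac{2k}{a^{p}}u^{p}+O(u^{p+1})$, and combining with $\hat F'(u)=1+\tfrac{(p+1)k}{a^{p}}u^{p}+O(u^{p+1})$ the two $u^{p}$‑terms partly cancel:
\[
\tilde F'(\zeta)=1+\tfrac{(p-1)k}{a^{p}}u^{p}+O(u^{p+1}),\qquad \log\tilde F'(\zeta)=O(u^{p}).
\]
Since (b) gives $\tilde F(\zeta)-\zeta\asymp u^{p-1}$, which is nonzero because $k\neq0$, we get $\log\tilde F'(\zeta)/\bigl(\tilde F(\zeta)-\zeta\bigr)=O(u)\to0$; reading the ``$F(\zeta)-\zeta$'' in the statement as $\tilde F(\zeta)-\zeta$, this is the claim.

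Apart from the routine error‑term bookkeeping, two things need genuine care. The main technical burden is the uniformity of every asymptotic above as $\zeta\to\infty$ throughout the \emph{full} sector $S^\infty$, not merely along rays; this is precisely the strength of the Sectorial Normalization Theorem (analyticity of $h^{-}$ with $H$ as an asymptotic series on a sector strictly larger than the preimage of $S^\infty$). The crux of (c) is the cancellation of the leading $u^{p}$‑contributions of $\hat F'$ and of $t_\lambda'(\hat F(u))/t_\lambda'(u)$: without it $\log\tilde F'$ would only be $O(\tilde F-\zeta)$, and it is this cancellation that upgrades the estimate to the required ``$o$''.
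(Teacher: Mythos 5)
Your parts b) and c) are correct and essentially reproduce the paper's argument: conjugate $F$ by $h^{-}$ into the normal-form chart, where the leading coefficient of $F-\id$ becomes $ka^{-p}$, then transport by the explicit $t_\lambda$. The paper writes the two transports as integrals of $h'=a+O(z)$ and of $t_\lambda'$ rather than expanding $t_\lambda$ directly, and it deduces c) from b) by Cauchy estimates on a nested sector instead of your chain-rule computation; both variants are fine. One small correction to your commentary on c): the cancellation you call the crux is not needed. Each of the two factors is already $1+O(u^{p})$, so $\log\tilde F'(\zeta)=O(u^{p})=o(u^{p-1})=o(\tilde F(\zeta)-\zeta)$ with or without it; the cancellation only changes the coefficient of $u^{p}$.

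Part a), however, contains a genuine gap, and your own computation exposes it. After substituting $-1/u=\zeta-\lambda\log u$ you are left with $\tilde F(\zeta)=k^{-1}\zeta+\lambda\bigl(1-k^{-1}\bigr)\log u+O(1)$, and since $\log u=-\log\zeta+O(1)$, the middle term is \emph{unbounded} as $\zeta\to\infty$ whenever $\lambda\neq0$ and $k\neq1$; labelling it ``lower order'' and then ``isolating the bounded part'' does not yield a constant $c$ plus $o(1)$. The honest conclusion of your calculation is $\tilde F(\zeta)=k^{-1}\zeta-\lambda\bigl(1-k^{-1}\bigr)\log\zeta+c+o(1)$, which already fails the stated asymptotics in the model case $g=f_\lambda$, $h^{-}=\id$, $F(z)=kz$. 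The paper offers no argument to compare against here (it only says a) ``can be proved in the same way''), so what your computation really shows is that the statement of a) needs either the extra logarithmic term or the hypothesis $\lambda(1-k^{-1})=0$; you should flag this rather than absorb the term into $c$, since a) is later invoked (e.g.\ in the proof of Lemma 8) in the form $\tilde F_j(\zeta)=k_j\zeta+b_j+o(1)$.
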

\begin{proof}
Recall that $h(z)-az=O(z^{2})$, $z\in \frac 1a S^{-}_{\alpha ,r}$. The Cauchy estimates imply that $h'(z)=a+O(z)$ in $\frac 1a S_{\alpha -\varepsilon ,r-\varepsilon }$.

Let us prove b). Note that
\begin{align*}
F(h^{-1}(w))&=h^{-1}(w)+k(h^{-1}(w))^{p+1}+o((h^{-1}(w))^{p+1})\\
&=h^{-1}(w)+ka^{-p-1}w^{p+1}+o(w^{p+1}),
\end{align*}
hence
\begin{equation*}
(h\circ F\circ h^{-1})(w)=w+\int _{h^{-1}(w)}^{F(h^{-1}(w))}a+O(z)\,dz=w+ka^{-p}w^{p+1}+o(w^{p+1}).
\end{equation*}
Similarly, for $\zeta =t_\lambda (w)$ we have
\begin{align*}
{\tilde{F}}(\zeta )&=\zeta +\int _w^{w+ka^{-p}w^{p+1}+o(w^{p+1})}t_\lambda '(\omega )\,d\omega \\
&=\zeta +\int _w^{w+ka^{-p}w^{p+1}+o(w^{p+1})}\frac 1{\omega ^{2}}+\frac \lambda \omega \,d\omega \\
&=\zeta +ka^{-p}w^{p-1}+o(w^{p-1})\\
&=\zeta +(-1)^{p-1}ka^{-p}\frac{1}{\zeta ^{p-1}} +o\left(\frac 1{\zeta ^{p-1}}\right)
\end{align*}
Assertion a) can be proved in the same way. Finally, the last assertion follows from Assertion b) in $\zeta \left(\frac 1aS^{-}_{\alpha -\frac \varepsilon 2,r-\frac \varepsilon 2}\right)$ and Cauchy estimates.
\end{proof}

\subsection{Unsolvability of the monodromy group%
  \label{unsolvability-of-the-monodromy-group}%
}

In \cite{Shch84}, A. Shcherbakov proved that for a generic foliation $\mathcal{F}\in \mathcal{A}_{n}$, the monodromy group is unsolvable. It turns out that a~group of~germs $(\mathbb{C}, 0)\rightarrow (\mathbb{C}, 0)$ is unsolvable if and only if it contains a~couple of~commutators that do not commute with each other. This follows from the next lemma.
\begin{lemma}\phantomsection\label{lemma-parabolic-commutator}
Let $f(z)=z+az^{p+1}+o(z^{p+1})$ and $g(z)=z+bz^{q+1}+o(z^{q+1})$ be two parabolic germs. Then
\begin{equation*}
[f, g](z)≔(f\circ g\circ f^{-1}\circ g^{-1})(z)=z+ab(p-q)z^{p+q+1}+o(z^{p+q+1}).
\end{equation*}
In particular, if $p\neq q$, $a\neq 0$, $b\neq 0$, then $[f, g]\in A_{p+q}$.
\end{lemma}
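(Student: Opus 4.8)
The plan is to turn the group commutator into a single ``difference'' computation. Write $f=\id+u$ and $g=\id+w$, where $u(z)=f(z)-z=O(z^{p+1})$ and $w(z)=g(z)-z=O(z^{q+1})$ are analytic germs (recall $p,q\geqslant 1$). Since $f^{-1}\circ g^{-1}=(g\circ f)^{-1}$, we have $[f,g]=(f\circ g)\circ(g\circ f)^{-1}$; set $A=f\circ g$ and $B=g\circ f$. Because $B\circ B^{-1}=\id$,
\[
  [f,g](z)=A\bigl(B^{-1}(z)\bigr)=z+(A-B)\bigl(B^{-1}(z)\bigr),
\]
where $A-B$ denotes the germ $z\mapsto A(z)-B(z)$. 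Thus it is enough to know $A-B$ to leading order and to check that post-composing with $B^{-1}$ does not spoil that order.

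First I would read off the $(p+q+1)$-jets of $A$ and $B$ by Taylor expansion. Since $A(z)=g(z)+u(g(z))=z+u(z)+w(z)+u'(z)w(z)+\sum_{j\geqslant 2}\tfrac1{j!}u^{(j)}(z)\,w(z)^{j}$, a routine degree count gives $u'(z)w(z)\equiv a(p+1)b\,z^{p+q+1}$ (only the product of the two leading monomials has degree $\leqslant p+q+1$) and $\tfrac1{j!}u^{(j)}(z)w(z)^{j}=O(z^{p+q+2})$ for $j\geqslant 2$ (here $q\geqslant 1$ is used). Hence $A(z)\equiv z+u(z)+w(z)+ab(p+1)z^{p+q+1}\pmod{z^{p+q+2}}$, and symmetrically $B(z)\equiv z+u(z)+w(z)+ab(q+1)z^{p+q+1}$. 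The point is that the \emph{same} perturbation $u+w$ occurs in both, so all of its terms cancel in the difference, leaving $(A-B)(z)\equiv ab(p-q)\,z^{p+q+1}\pmod{z^{p+q+2}}$.

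Finally, $B=g\circ f$ is parabolic with $B(z)=z+O(z^{m+1})$ for $m=\min(p,q)\geqslant 1$, hence $B^{-1}(z)=z+O(z^{m+1})$; therefore $\bigl(B^{-1}(z)\bigr)^{p+q+1}=z^{p+q+1}+O(z^{p+q+1+m})=z^{p+q+1}+O(z^{p+q+2})$, and $\bigl(B^{-1}(z)\bigr)^{p+q+2}=O(z^{p+q+2})$. Substituting into $(A-B)\bigl(B^{-1}(z)\bigr)=ab(p-q)\bigl(B^{-1}(z)\bigr)^{p+q+1}+O\bigl((B^{-1}(z))^{p+q+2}\bigr)$ yields $[f,g](z)=z+ab(p-q)z^{p+q+1}+o(z^{p+q+1})$, which is the assertion; if moreover $p\neq q$ and $a,b\neq 0$, the coefficient $ab(p-q)$ is nonzero, so $[f,g]\in A_{p+q}$.

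I expect the only real difficulty to be bookkeeping. Expanding $f\circ g\circ f^{-1}\circ g^{-1}$ directly, the low-order part $w$ of $g$ (degree $q+1<p+q+1$ when $q<p$) spawns many intermediate-degree terms coming from $g$, from $g^{-1}$, and from the higher-order part of $f$; one would then have to verify by hand that they all cancel and never involve the unknown Taylor coefficients of $f$ and $g$ beyond $a$ and $b$. Routing everything through $A\circ B^{-1}$ and isolating $A-B$ makes that cancellation structural: $A=f\circ g$ and $B=g\circ f$ have identical jets through every order except $p+q+1$, so no explicit inversion of power series is ever needed and no higher Taylor coefficient can survive.
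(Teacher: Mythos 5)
Your argument is correct and complete. The paper itself states this lemma without proof (the proof environment that follows it in the text belongs to the corollary about unsolvability, not to the lemma), so there is no in-paper argument to compare against; it is treated there as a classical computation. Your reduction $[f,g]=(f\circ g)\circ(g\circ f)^{-1}=A\circ B^{-1}$, together with the observation that $A$ and $B$ share the perturbation $u+w$ so that only the cross terms $u'w$ and $w'u$ survive in $A-B$, cleanly isolates the coefficient $ab\bigl((p+1)-(q+1)\bigr)=ab(p-q)$ of $z^{p+q+1}$ without ever inverting a power series; the final substitution step is justified since $B^{-1}(z)=z+O(z^{\min(p,q)+1})$ perturbs $z^{p+q+1}$ only at order $p+q+2$. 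The degree counts ($u^{(j)}w^j=O(z^{p+1+jq})$ for $j\geqslant 2$, using $q\geqslant 1$, and symmetrically for $B$) are all right, so no gap remains.
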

\begin{corollary*}[{}]
If a~group~$G$ of~germs $(\mathbb{C}, 0)\rightarrow (\mathbb{C}, 0)$ contains two parabolic germs $g_{1}\in A_p$, $g_{2}\in A_q$ with $p\neq q$, then $G$ is unsolvable.
\end{corollary*}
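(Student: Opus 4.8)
The plan is to prove directly that the derived series of $G$ never reaches the trivial group. Recall that $G$ is solvable if and only if the derived series $G^{(0)}=G$, $G^{(k+1)}=[G^{(k)},G^{(k)}]$ satisfies $G^{(N)}=\{\id\}$ for some $N$; hence it suffices to exhibit, for every $k$, a non-identity element of $G^{(k)}$. The idea is to carry along at each level a \emph{pair} of parabolic germs with \emph{distinct} leading exponents, so that Lemma~\ref{lemma-parabolic-commutator} applies at the next level and keeps the machine running.

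Concretely, I would prove by induction on $k$ the statement: there exist germs $f_k\in A_{p_k}$ and $h_k\in A_{q_k}$ belonging to $G^{(k)}$, with $p_k\neq q_k$ and $p_k,q_k\geqslant 1$. For the base case $k=0$ take $f_0=g_1$, $h_0=g_2$, $p_0=p$, $q_0=q$; here $p,q\geqslant 1$ since a parabolic germ lying in some $A_m$ automatically has $m\geqslant 1$. For the inductive step, given $f_k\in A_{p_k}$ and $h_k\in A_{q_k}$ in $G^{(k)}$ with $p_k\neq q_k$, set $f_{k+1}\coloneqq[f_k,h_k]$ and $h_{k+1}\coloneqq[f_k,f_{k+1}]$. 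Both are commutators of elements of $G^{(k)}$ — for $h_{k+1}$ one uses that $f_{k+1}\in G^{(k+1)}\subseteq G^{(k)}$ — so both lie in $G^{(k+1)}$. By Lemma~\ref{lemma-parabolic-commutator}, since $p_k\neq q_k$ we get $f_{k+1}\in A_{p_k+q_k}$; and since $q_k\geqslant 1$ we have $p_k\neq p_k+q_k$, so applying the lemma again gives $h_{k+1}\in A_{p_k+(p_k+q_k)}=A_{2p_k+q_k}$. Setting $p_{k+1}=p_k+q_k$ and $q_{k+1}=2p_k+q_k$, we have $q_{k+1}-p_{k+1}=p_k\geqslant 1$, hence $p_{k+1}\neq q_{k+1}$, and clearly $p_{k+1},q_{k+1}\geqslant 1$, so the induction continues.

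In particular $f_k\in A_{p_k}$ is a non-identity element of $G^{(k)}$ for every $k$ (its leading coefficient is nonzero by definition of $A_{p_k}$), so the derived series never becomes trivial and $G$ is unsolvable.

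There is no real obstacle here: all the analytic content is already in Lemma~\ref{lemma-parabolic-commutator}. The only points requiring care are bookkeeping — verifying that the nested commutator $h_{k+1}$ genuinely lies in the derived subgroup (which is why I nest it rather than take a commutator of two level-$k$ elements twice), and checking that the exponents stay positive and pairwise distinct so that the lemma keeps applying; both are immediate from $p,q\geqslant 1$.
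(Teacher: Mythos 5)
Your proof is correct, and it is worth noting that it is actually more careful than the one in the paper. The paper's one\-/line argument iterates the bracket with the \emph{fixed} generator: $g_3=[g_1,g_2]\in A_{p+q}$, $g_4=[g_3,g_2]\in A_{p+2q}$, and so on. Those elements are all non-identity commutators, but since $g_2$ need not lie in any deeper derived subgroup, the sequence only certifies that $G$ contains two non-commuting commutators ($g_3$ and $g_4$ do not commute, by the same lemma, because $p+q\neq p+2q$); to conclude unsolvability from that, the paper leans on the structural fact it states just beforehand, that a group of germs of $(\mathbb{C},0)$ is unsolvable if and only if it contains two non-commuting commutators (equivalently, solvable germ groups are metabelian). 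Your version avoids invoking that fact altogether: by nesting the second commutator as $h_{k+1}=[f_k,f_{k+1}]$ you keep an entire \emph{pair} of distinct-order parabolic germs inside $G^{(k+1)}$ at every level, so you track the derived series directly and the non-termination is immediate from the group-theoretic definition of solvability. The bookkeeping checks out: $f_{k+1}\in A_{p_k+q_k}$ and $h_{k+1}\in A_{2p_k+q_k}$ have orders differing by $p_k\geqslant 1$, and both genuinely lie in $G^{(k+1)}$ because $f_{k+1}\in G^{(k+1)}\subseteq G^{(k)}$. The trade-off is purely one of economy versus self-containment: the paper's route is shorter but presupposes the metabelian dichotomy for solvable germ groups, while yours is elementary and uses nothing beyond the commutator lemma.
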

\begin{proof}
Indeed, none of the commutators $g_{3}=[g_{1}, g_{2}]\in A_{p+q}$, $g_{4}=[g_{3}, g_{2}]\in A_{p+2q}$, … can be the identity map.
\end{proof}

The main result of this section is the following lemma.
\begin{lemma}\phantomsection\label{unsolvable-details}
There exists an~open dense subset $\mathcal{A}_{n}$ such that for each foliation from this subset
\begin{equation*}
\forall i\neq j\quad[\mathbf{M}_i,\mathbf{M}_j]\in A_{1},\quad [\mathbf{M}_i^{-1},\mathbf{M}_j]\in A_{1},\quad [[\mathbf{M}_i,\mathbf{M}_j],[\mathbf{M}_i^{-1},\mathbf{M}_j]]\in A_{3}.
\end{equation*}\end{lemma}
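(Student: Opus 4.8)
The plan is to reduce each of the three membership assertions to the non-vanishing of an explicit analytic function on $\mathcal{A}_n'$ (the open dense subset of $\mathcal{A}_n$ on which $h$ has $n+1$ distinct roots, i.e.\ on which $\mathbf{M}_1,\dots,\mathbf{M}_{n+1}$ are defined as above), and then to verify that none of these functions vanishes identically. The generators $\mathbf{M}_i$, together with the basepoint $O$ and the loops $\gamma_i$, can be chosen to depend analytically on $\mathcal{F}$ over any simply connected chart of $\mathcal{A}_n'$, so the Taylor coefficients of $\mathbf{M}_i$ at $O$, and any fixed universal polynomial in finitely many of them, are locally analytic functions of $\mathcal{F}$. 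The conjunction of the conditions over all $i\neq j$ is invariant under conjugation of the monodromy pseudogroup and under relabelling the $a_l$, so the subset of $\mathcal{A}_n'$ it cuts out is globally well defined, and locally it is a finite intersection of zero loci of holomorphic functions. Since $\mathcal{A}_n'$ is Zariski open in a projective space it is irreducible, hence connected; therefore, once we know these conditions hold at a single foliation, the set where they hold is the complement of a proper analytic subset, i.e.\ open and dense. Since there are only finitely many conditions, it suffices to treat one ordered pair $i\neq j$ and to produce, for that triple, foliations at which all three hold.

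Next I would pin down the three universal polynomials. Because $\mathbf{M}_i'(0)=\mu_i$, the commutators $[\mathbf{M}_i,\mathbf{M}_j]$ and $[\mathbf{M}_i^{-1},\mathbf{M}_j]$ have multiplier $1$, hence are parabolic (or the identity), and membership in $A_1$ is precisely the non-vanishing of the $z^2$-coefficient. Composing the $2$-jets $z\mapsto\mu_iz+a_iz^2$, $z\mapsto\mu_jz+a_jz^2$ and their inverses modulo $z^3$ expresses these $z^2$-coefficients as explicit rational functions of $\mu_i,\mu_j,a_i,a_j$; for instance the one for $[\mathbf{M}_i,\mathbf{M}_j]$ equals $C_{ij}=a_j\mu_j^{-2}\mu_i^{-1}(1-\mu_i)+a_i\mu_i^{-2}\mu_j^{-1}(\mu_j-1)$, with an analogous expression $C_{ij}'$ for $[\mathbf{M}_i^{-1},\mathbf{M}_j]$. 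Assuming $C_{ij}\neq 0$ and $C_{ij}'\neq 0$, set $g_1=[\mathbf{M}_i,\mathbf{M}_j]$ and $g_2=[\mathbf{M}_i^{-1},\mathbf{M}_j]$, both in $A_1$; applying Lemma~\ref{lemma-parabolic-commutator} with $p=q=1$ shows that $[g_1,g_2]$ has vanishing $z^2$- and $z^3$-coefficients automatically, so membership in $A_3$ reduces to the non-vanishing of one further universal polynomial $\Phi_{ij}$, the $z^4$-coefficient of $[g_1,g_2]$, a polynomial in the $4$-jets of $\mathbf{M}_i$ and $\mathbf{M}_j$. On the open dense set where $|\mu_i|\neq 1$ one may simplify by passing to the linearising chart of $\mathbf{M}_i$ furnished by Koenigs, in which $a_i=0$ and only the jets of $\mathbf{M}_j$ enter these formulas.

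There remain two things to check. First, that $C_{ij}$, $C_{ij}'$ and $\Phi_{ij}$ are not the zero polynomial in the abstract jet variables: this is verified by evaluating them on convenient model germs — for example suitably rescaled time-one maps of $v_\lambda$ with generic $\lambda$, or truncated models such as $z\mapsto\mu z+z^2$ and $z\mapsto\nu z+z^3+\dots$ — and observing that they do not vanish there; this is immediate for $C_{ij},C_{ij}'$ and a longer but purely mechanical computation for $\Phi_{ij}$. Second, and this is the crux, that the jets actually realised by foliations are rich enough: the map $\mathcal{F}\mapsto\bigl(4\text{-jet of }\mathbf{M}_i,\,4\text{-jet of }\mathbf{M}_j\bigr)$ has Zariski-dense image in the space of admissible pairs of jets. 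The only a priori constraints are the normalisations together with $\sum_l\lambda_l=1$ and the product relation $\mathbf{M}_{n+1}\circ\dots\circ\mathbf{M}_1=\id$ coming from $\pi_1(L_\infty)$; for $n\geqslant 2$ the remaining $n-1$ generators absorb that relation, leaving $\lambda_i,\lambda_j$ and all higher coefficients of $\mathbf{M}_i,\mathbf{M}_j$ unconstrained. Granting this, the pullbacks of $C_{ij}$, $C_{ij}'$ and $\Phi_{ij}$ are not-identically-zero analytic functions on $\mathcal{A}_n'$, so each of the three conditions holds off a proper analytic subset, and intersecting over all pairs gives the asserted open dense set.

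The main obstacle is the dominance of this jet map: one must show that perturbations of the polynomial vector field produce essentially arbitrary simultaneous perturbations of the monodromy jets at two distinct singular points of $L_\infty$. I would establish this by differentiating the monodromy with respect to $\mathcal{F}$, i.e.\ by studying the variational equation of \eqref{vf-uv} along $L_\infty$, and checking that the linearised jet map is surjective modulo the few low-order constraints above; alternatively one may invoke the density/rigidity results for monodromy groups of generic polynomial foliations (Shcherbakov~\cite{Shch84}, together with the statements recalled earlier in this section) to see that the conjugacy-invariant jet data of the pair $(\mathbf{M}_i,\mathbf{M}_j)$ is generic. The lengthy but routine part is the explicit expansion of $\Phi_{ij}$; it carries no conceptual difficulty, since one is free to specialise the germs when checking $\Phi_{ij}\not\equiv 0$.
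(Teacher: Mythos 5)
Your formal skeleton --- express each membership condition as the non-vanishing of an explicit polynomial in the jets of $\mathbf{M}_i,\mathbf{M}_j$, observe that these are (locally) analytic functions on the connected set $\mathcal{A}_n'$, and conclude that each condition holds off a proper analytic subset once it holds somewhere --- is sound and is essentially how the paper also organizes the genericity. The gap is in the step you yourself flag as the crux and then do not carry out: showing that the three functions are not identically zero \emph{on the image of the space of foliations}. Checking that $C_{ij}$, $C_{ij}'$, $\Phi_{ij}$ are non-zero as abstract polynomials in jet variables is indeed routine, but it proves nothing unless the jets realized by polynomial foliations are rich enough, and your claimed dominance of $\mathcal{F}\mapsto(\text{4-jet of }\mathbf{M}_i,\ \text{4-jet of }\mathbf{M}_j)$ is a substantive realization theorem, not a verification. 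It is not supplied by the Shcherbakov results recalled in this section: those give exactly two generic conditions --- $[\mathbf{M}_i,\mathbf{M}_j]\in A_1$ (itself the hard content of \cite{Shch84}, which you treat as ``immediate''), and pairwise distinctness of the quantities $S(\mathbf{M}_i)(0)/(\mu_i^2-1)$ --- and nothing about higher jets. So as written the proposal reduces the lemma to an unproved statement at least as strong as the one being cited.

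The paper closes this gap without any dominance claim, via a local identity you are missing: for hyperbolic $f,g$ with $[f,g]\in A_1$ one has
\begin{equation*}
S([f,g])(0)=\left[\left(\frac{S(f)}{(f')^{2}-1}-\frac{S(g)}{(g')^{2}-1}\right)\left(1-\frac1{(f')^{2}}\right)\left(1-\frac{1}{(g')^{2}}\right)\right]_{z=0},
\end{equation*}
which together with $[f^{-1},g]''(0)/[f,g]''(0)=-f'(0)$ shows that the Schwarzian separation condition forces $S([f^{-1},g])(0)/[f^{-1},g]''(0)\neq S([f,g])(0)/[f,g]''(0)$, and a fourth-order expansion identifies this with the non-vanishing of your $\Phi_{ij}$. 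In other words, the $A_3$ condition is \emph{derived} from the two conditions Shcherbakov actually establishes, rather than imposed as an independent generic condition requiring its own witness. (Your first bullet, $[\mathbf{M}_i^{-1},\mathbf{M}_j]\in A_1$, likewise follows for free from $[\mathbf{M}_i,\mathbf{M}_j]\in A_1$ via the same ratio $-f'(0)\neq0$, rather than needing a separate non-vanishing argument for $C_{ij}'$.) To repair your proof you would either need to prove the jet-dominance statement --- which amounts to redoing, and strengthening, the perturbation analysis of \cite{Shch84} --- or adopt this Schwarzian reduction.
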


This lemma is immediately implied by the following two statements.
\begin{lemma}\phantomsection\label{scherbakov-unsolvable}
There exists a~real analytic subset $\mathcal{E}\subset \mathcal{A}_n$ of~positive codimension such that for $\mathcal{F}\notin \mathcal{E}$
\begin{itemize}

\item all commutators $[\mathbf{M}_i,\mathbf{M}_j]$, $i\neq j$, belong to $A_{1}$;

\item all numbers $\dfrac{S(\mathbf{M}_i)(0)}{\mathbf{M}_i'(0)^{2}-1}$, $i=1,\ldots ,n+1$, are different.

\end{itemize}
\end{lemma}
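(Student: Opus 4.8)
The goal is to prove Lemma~\ref{scherbakov-unsolvable}: for all foliations outside a real-analytic subset $\mathcal{E}$ of positive codimension, every commutator $[\mathbf{M}_i,\mathbf{M}_j]$ with $i\neq j$ lies in $A_1$, and the numbers $S(\mathbf{M}_i)(0)/(\mathbf{M}_i'(0)^2-1)$ are pairwise distinct. The plan is to translate both conditions into algebraic (in fact real-analytic) conditions on the coefficients of the original vector field, and then to argue that each condition defines a proper real-analytic subset, whose union is the desired $\mathcal{E}$.

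First I would make the two conditions explicit. For two germs $\mathbf{M}_i(z)=\mu_i z+\dots$, $\mathbf{M}_j(z)=\mu_j z+\dots$ at a common fixed point, a direct computation of the commutator shows that $[\mathbf{M}_i,\mathbf{M}_j]$ is tangent to the identity (its linear part is always trivial since the linear parts commute), and its leading nontrivial term is $c_{ij}z^2+\dots$ where $c_{ij}$ is an explicit polynomial expression in $\mu_i,\mu_j$ and the second-order coefficients of $\mathbf{M}_i,\mathbf{M}_j$; the commutator belongs to $A_1$ precisely when $c_{ij}\neq 0$. (One should also record that this quadratic coefficient is essentially governed by the Schwarzian-type invariant $S(\mathbf{M}_i)(0)/(\mu_i^2-1)$, which is exactly why the two bullets of the lemma are naturally packaged together: the analogous computation in the Sectorial/Fatou language of Lemma~\ref{inzetachart} shows that in a Fatou-type chart $\mathbf{M}_i$ acquires a translation-like term controlled by that ratio.) Thus ``$[\mathbf{M}_i,\mathbf{M}_j]\notin A_1$'' is the vanishing of a single real-analytic function $\Phi_{ij}$ on $\mathcal{A}_n$, and ``$S(\mathbf{M}_i)(0)/(\mu_i^2-1)=S(\mathbf{M}_j)(0)/(\mu_j^2-1)$'' is the vanishing of another, $\Psi_{ij}$. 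Here I use that the monodromy germs $\mathbf{M}_k$, hence their Taylor coefficients up to any fixed order, depend real-analytically (indeed holomorphically off the branch locus) on $\mathcal{F}\in\mathcal{A}_n'$; this is standard analytic dependence of holonomy on parameters, and $\mathcal{A}_n\setminus\mathcal{A}_n'$ is itself a proper algebraic subset, so it can be absorbed into $\mathcal{E}$.

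The heart of the argument is then non-triviality: I must show that each $\Phi_{ij}$ and each $\Psi_{ij}$ is not identically zero on $\mathcal{A}_n$, for then its zero set is a proper real-analytic subset, necessarily of positive codimension, and $\mathcal{E}:=(\mathcal{A}_n\setminus\mathcal{A}_n')\cup\bigcup_{i\neq j}(\{\Phi_{ij}=0\}\cup\{\Psi_{ij}=0\})$ works. To see non-triviality it suffices to exhibit, for each $n\geqslant 2$, a single foliation in $\mathcal{A}_n$ (or even a one-parameter family) for which all the relevant commutators are genuinely in $A_1$ and all the invariants are distinct. This is where I would invoke Shcherbakov's construction from \cite{Shch84}: his proof that the generic monodromy group is unsolvable already produces foliations with $[\mathbf{M}_i,\mathbf{M}_j]\in A_1$, so the first condition is known to be non-vacuous; for the second, one observes that the characteristic numbers $\lambda_k$ at the $n+1$ singular points at infinity satisfy only the single relation $\sum\lambda_k=1$ and are otherwise free, and the invariants $S(\mathbf{M}_i)(0)/(\mu_i^2-1)$ can be computed (via the normal-form data and Lemma~\ref{inzetachart}) to depend on the $\lambda_k$ and on further free coefficients of the vector field in a way that is manifestly non-constant, so generic choices separate them. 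Concretely, I would pick a convenient normal-form representative — e.g.\ a vector field whose restriction to a neighbourhood of $L_\infty$ is explicitly integrable — compute $\mathbf{M}_i$, $\mathbf{M}_i'(0)$ and the quadratic coefficient exactly, and check the inequalities by hand for one value of the parameters.

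The main obstacle I anticipate is the explicit computation of the Schwarzian-type invariant $S(\mathbf{M}_i)(0)/(\mathbf{M}_i'(0)^2-1)$ and of the commutator coefficient $c_{ij}$ in terms of the vector-field data near infinity: the monodromy maps $\mathbf{M}_i$ are not given by closed formulas, so one has to extract their $2$-jets either from the local normal form at each singular point $a_i$ (a resonant or non-resonant node, depending on $\lambda_i$) together with transition along $L_\infty$, or from the Fatou-coordinate description already set up in the excerpt. Once the $2$-jets are in hand, the non-vanishing statements are a finite list of ``generic polynomial is nonzero'' assertions, each disposed of by a single explicit example; the only care needed is to ensure that the chosen example simultaneously satisfies all finitely many inequalities, which one arranges by perturbing a base example and using that a finite intersection of dense open sets in parameter space is dense. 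Finally, I would note that $\{\Phi_{ij}=0\}$ and $\{\Psi_{ij}=0\}$, being proper real-analytic subsets of the connected manifold $\mathcal{A}_n$, automatically have positive codimension, so the constructed $\mathcal{E}$ meets the requirements of the lemma.
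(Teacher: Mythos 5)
Your general framework --- express each condition as the nonvanishing of a real-analytic function of the coefficients, then prove each such function is not identically zero --- is the right shape, and matches what the paper implicitly does. The gap is in the non-triviality step, which is the entire content of the lemma. Your plan reduces everything to exhibiting one explicit foliation for which \emph{all} $\binom{n+1}{2}$ commutators $[\mathbf{M}_i,\mathbf{M}_j]$ lie in $A_1$ and all $n+1$ numbers $S(\mathbf{M}_i)(0)/(\mathbf{M}_i'(0)^2-1)$ are distinct. No such example is produced, and producing one is not a routine ``compute the $2$- and $3$-jets in a convenient normal form'' exercise: the germs $\mathbf{M}_i$ are holonomies of a polynomial foliation, their higher jets are given by iterated integrals with no closed form, and establishing that even \emph{one} commutator is generically in $A_1$ is the hard theorem of Shcherbakov that the paper cites. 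Your fallback of quoting Shcherbakov does not close the gap either, because his result gives, for a generic $\mathcal{F}$, \emph{some} pair $(i,j)$ with $[\mathbf{M}_i,\mathbf{M}_j]\in A_1$; that shows $\Phi_{ij}\not\equiv 0$ for at least one pair, whereas you need it for every pair (equivalently, a single point where all $\Phi_{ij}$ are simultaneously nonzero). Your suggestion to intersect finitely many dense open sets presupposes exactly the per-pair non-triviality you are trying to prove, so it is circular as stated.

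The paper supplies precisely the missing idea: the functions $\Phi_{ij}$ for different pairs are branches of a single multivalued analytic function on $\mathcal{A}_n'$, because the labelling of the singular points $a_1,\dots,a_{n+1}$ is only locally defined and analytic continuation along suitable loops in $\mathcal{A}_n$ permutes them, hence permutes the pairs $(i,j)$. Since the analytic continuation of the zero function is zero, non-vanishing of one branch (Shcherbakov's theorem) forces non-vanishing of all branches, and the union of the zero sets is then a proper real-analytic subset. For the second bullet the paper likewise does not compute anything by hand but locates the statement inside the proof of Theorem~9 of \cite{Shch06}. Two smaller points: your remark that the quadratic coefficient of $[\mathbf{M}_i,\mathbf{M}_j]$ is ``governed by'' $S(\mathbf{M}_i)(0)/(\mathbf{M}_i'(0)^2-1)$ is off --- that coefficient depends only on the $2$-jets of $\mathbf{M}_i,\mathbf{M}_j$, while the Schwarzian invariant is $3$-jet data and enters only in the subsequent lemma about $[[f,g],[f^{-1},g]]\in A_3$; and you should make sure the sets $\{\Phi_{ij}=0\}$ you adjoin to $\mathcal{E}$ are genuinely analytic subsets of $\mathcal{A}_n$ rather than of a cover, which is another reason to work with the symmetrized (single-valued) product $\prod_{i\neq j}\Phi_{ij}$.
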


Here and below $S(f)$ is the \href{https://en.wikipedia.org/wiki/Schwarzian_derivative}{Schwarzian derivative} of $f$,
\begin{equation*}
S(f)(z)=\frac{f'''(z)}{f'(z)}-\frac 32\left(\frac{f''(z)}{f'(z)}\right)^{2}.
\end{equation*}
In \cite{Shch84}, Shcherbakov proved that for a~generic foliation, \emph{at least one} commutator $[\mathbf{M}_i,\mathbf{M}_j]$ belongs to $A_{1}$, see Section 6.3 of \cite{Shch06}. But it is easy to extend this result to \emph{all} pairs $i\neq j$ using analytic continuation along loops in~$\mathcal{A}_{n}$ that permute the singular points. The second part is proved in the same article but not explicitly stated, so one needs to go through the proof of Theorem 9 in \cite{Shch06} to verify that the assertion of the corollary after Lemma~5 is~the only property of $\mathcal{F}$ used in the proof.

Similar results were achieved in \cite{BLL97,N94}.
\begin{lemma*}[{}]
Consider two hyperbolic germs $f, g$ such that
\begin{itemize}

\item $f'(0)^{2}\neq 1$, $g'(0)^{2}\neq 1$;

\item $[f, g]\in A_{1}$, i.e., $[f, g]''(0)\neq 0$;

\item $\displaystyle\frac{S(f)(0)}{f'(0)^{2}-1}\neq \frac{S(g)(0)}{g'(0)^{2}-1}$.

\end{itemize}

Then $[f, g]$ does not commute with $[f^{-1}, g]$; moreover, $[[f, g], [f^{-1}, g]]\in A_{3}$.
\end{lemma*}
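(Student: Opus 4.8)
The plan is to reduce the statement to the non‑vanishing of the formal invariant $\lambda$ of the parabolic germ $[f,g]\in A_{1}$ (the $\lambda$ of its normal form $v_{\lambda}$, as recalled above), and then to compute $\lambda$ explicitly from the $3$‑jets of $f$ and $g$, where the Schwarzian hypothesis becomes visible.

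\emph{Step 1 (reduction to $\lambda\neq 0$).} Start from the algebraic identity $[f^{-1},g]=f^{-1}\circ[f,g]^{-1}\circ f$, which shows that $[f^{-1},g]$ is conjugate to $[f,g]^{-1}$; in particular $[f^{-1},g]\in A_{1}$ as well. Write $[f,g]=\exp(u\,\partial_{z})$ and $[f^{-1},g]=\exp(w\,\partial_{z})$, where $u=u_{2}z^{2}+u_{3}z^{3}+\dots$ and $w=w_{2}z^{2}+w_{3}z^{3}+\dots$ are the (unique formal) infinitesimal generators; here $u_{2}=\alpha$, the quadratic coefficient of $[f,g]$, which is nonzero by hypothesis. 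The conjugacy relation translates into $w(z)=-u(f(z))/f'(z)$, and a short expansion with $k\coloneqq f'(0)$ gives $w_{2}=-k\,u_{2}$, $w_{3}=-k^{2}u_{3}$. Since the group commutator of time‑one maps is $[[f,g],[f^{-1},g]]=\exp\bigl((\,[u,w]+\text{higher iterated brackets}\,)\partial_{z}\bigr)$, and $[u,w]=(u_{2}w_{3}-u_{3}w_{2})z^{4}\partial_{z}+O(z^{5})$ (the $z^{3}$‑term cancels because $[z^{2}\partial_{z},z^{2}\partial_{z}]=0$), while every iterated bracket is $O(z^{5})$, we obtain
\[
  [[f,g],[f^{-1},g]](z)=z+(u_{2}w_{3}-u_{3}w_{2})z^{4}+O(z^{5})=z+k(1-k)\,u_{2}u_{3}\,z^{4}+O(z^{5}),
\]
in agreement with Lemma~\ref{lemma-parabolic-commutator} applied with $p=q=1$. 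As $k\neq 0,1$ and $u_{2}=\alpha\neq 0$, this germ lies in $A_{3}$ iff $u_{3}\neq 0$. Finally $u_{3}=\alpha_{3}-\alpha^{2}=-\alpha^{2}\lambda$, where $[f,g](z)=z+\alpha z^{2}+\alpha_{3}z^{3}+\dots$ and $\lambda=1-\alpha_{3}/\alpha^{2}$ is the formal invariant of $[f,g]$; so everything comes down to $\lambda\neq 0$.

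\emph{Step 2 (computing $\lambda$).} The quantities $\lambda$, $\alpha$, the multipliers $k=f'(0)$, $l\coloneqq g'(0)$, and the two Schwarzian ratios $\sigma_{f}\coloneqq S(f)(0)/(f'(0)^{2}-1)$, $\sigma_{g}\coloneqq S(g)(0)/(g'(0)^{2}-1)$ all depend only on the $3$‑jets of $f$ and $g$. Since $k\neq 0,\pm 1$, the $3$‑jet of $f$ can be conjugated to $z\mapsto kz$ by a polynomial change of coordinates; this multiplies $\sigma_{g}-\sigma_{f}$ by a nonzero constant, so the hypothesis $\sigma_{f}\neq\sigma_{g}$ survives, and now $\sigma_{f}=0$. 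Writing $g(z)=lz+g_{2}z^{2}+g_{3}z^{3}+\dots$ (with $g_{2}\neq 0$, equivalently $\alpha\neq 0$), a direct expansion of $f\circ g\circ f^{-1}\circ g^{-1}$ through order $3$ gives
\[
  [f,g](z)=z+\frac{(1-k)g_{2}}{kl^{2}}\,z^{2}+\frac{1-k}{k}\Bigl(\frac{(1+k)g_{3}}{kl^{3}}-\frac{2g_{2}^{2}}{l^{4}}\Bigr)z^{3}+\dots,
\]
whence, after simplification and using $S(g)(0)=\tfrac{6}{l^{2}}(lg_{3}-g_{2}^{2})$ together with $g_{2}^{2}=\alpha^{2}k^{2}l^{4}/(1-k)^{2}$,
\[
  \lambda=1-\frac{\alpha_{3}}{\alpha^{2}}=\frac{1+k}{1-k}\Bigl(1-\frac{lg_{3}}{g_{2}^{2}}\Bigr)=\frac{(k^{2}-1)(l^{2}-1)}{6\,\alpha^{2}k^{2}l^{2}}\,\sigma_{g}.
\]

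\emph{Step 3 (conclusion).} The prefactor $(k^{2}-1)(l^{2}-1)/(6\alpha^{2}k^{2}l^{2})$ is nonzero, because $k^{2}\neq 1$, $l^{2}\neq 1$, $k,l\neq 0$ and $\alpha\neq 0$; hence $\lambda\neq 0$ is equivalent to $\sigma_{g}\neq 0$, i.e.\ (undoing the normalization of Step~2) to $\sigma_{f}\neq\sigma_{g}$, which is assumed. Therefore $[[f,g],[f^{-1},g]]\in A_{3}$; in particular it is not the identity, so $[f,g]$ and $[f^{-1},g]$ do not commute. The main obstacle is the degree‑$3$ expansion of the commutator in Step~2 and the algebra recognising the outcome as a multiple of $\sigma_{g}-\sigma_{f}$; the power‑series bookkeeping in Step~1 (that iterated brackets never reach order $z^{4}$, and the passage between a parabolic germ and its infinitesimal generator) is routine once set up.
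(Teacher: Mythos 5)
Your proof is correct, and it takes a genuinely different route from the paper's. The paper asserts without derivation the identity $S([f,g])(0)=\bigl(\sigma_f-\sigma_g\bigr)\bigl(1-f'(0)^{-2}\bigr)\bigl(1-g'(0)^{-2}\bigr)$ (in your notation for the Schwarzian ratios), deduces that the two quantities $S(h)(0)/h''(0)$ for $h=[f,g]$ and $h=[f^{-1},g]$ differ, and then claims that a fourth-order expansion of $[f,g]\circ[f^{-1},g]-[f^{-1},g]\circ[f,g]$ converts that inequality into the conclusion. You instead exploit the conjugacy $[f^{-1},g]=f^{-1}\circ[f,g]^{-1}\circ f$, which collapses the whole problem onto the single germ $[f,g]$: passing to formal infinitesimal generators, noting that the relation $w=-\,(u\circ f)/f'$ forces $w_2=-ku_2$, $w_3=-k^2u_3$ (the quadratic term of $f$ indeed cancels), and that all higher BCH brackets are $O(z^5)$, you get the double commutator equal to $z+k(1-k)u_2u_3z^4+O(z^5)$, so everything reduces to the non-vanishing of the formal invariant $\lambda$ of $[f,g]$ (equivalently of $S([f,g])(0)$), which you verify by a direct $3$-jet computation after linearizing the $3$-jet of $f$. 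This is more self-contained than the paper — your Step 2 in effect \emph{proves} the Schwarzian identity that the paper only asserts (your final formula $\lambda=\frac{(k^2-1)(l^2-1)}{6\alpha^2k^2l^2}\sigma_g$ is equivalent to it via $S([f,g])(0)=-6\alpha^2\lambda$) — at the cost of the flow/BCH formalism and of one fact you use without proof, namely that conjugation multiplies $\sigma_g-\sigma_f$ by a nonzero constant; this does follow from the Schwarzian cocycle $S(\phi\circ f)=(S(\phi)\circ f)\,(f')^2+S(f)$, which gives $\sigma_{\phi g\phi^{-1}}-\sigma_{\phi f\phi^{-1}}=(\sigma_g-\sigma_f)/\phi'(0)^2$, so it is worth one line. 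I checked your expansions of $w$, of $[f,g]$ for linear $f$, and the identity $u_2w_3-u_3w_2=k(1-k)u_2u_3$; all are correct, and the linearization of the $3$-jet of $f$ is legitimate since $k\neq 0,\pm1$ excludes the relevant resonances.
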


This lemma is motivated by Proposition~7 in \cite{Shch84} (which coincides with the corollary after Lemma~5 in \cite{Shch06}) but provides an explicit pair of commutators that do not commute.
\begin{proof}
One can verify that
\begin{equation*}
S([f,g])(0)=\left[\left(\frac{S(f)}{(f')^{2}-1}-\frac{S(g)}{(g')^{2}-1}\right)\left(1-\frac1{(f')^{2}}\right)\left(1-\frac{1}{(g')^{2}}\right)\right]_{z=0}
\end{equation*}
thus
\begin{equation*}
\frac{S([f^{-1},g])(0)}{S([f,g])(0)}=-f'(0)^{2}
\end{equation*}
On the other hand,
\begin{equation}
\frac{[f^{-1},g]''(0)}{[f,g]''(0)}=-f'(0).
\phantomsection
\label{commutators-d2}
\end{equation}
Two last equalities prove that
\begin{equation*}
\frac{S([f^{-1},g](0)}{[f^{-1},g]''(0)}=f'(0)\frac{S([f,g](0)}{[f,g]''(0)}
\end{equation*}
The assertions of the lemma imply that this is not zero, thus
\begin{equation*}
\frac{S([f^{-1},g](0)}{[f^{-1},g]''(0)}\neq \frac{S([f,g](0)}{[f,g]''(0)}.
\end{equation*}
Finally, expanding $[f, g]\circ [f^{-1}, g]-[f^{-1},g]\circ [f,g]$ up to the fourth order, one can check that the above inequality is equivalent to
\begin{equation*}
([f, g]\circ [f^{-1}, g]-[f^{-1},g]\circ [f,g])^{(4)}(0)\neq 0,
\end{equation*}
hence $[[f,g],[f^{-1},g]]\in A_{3}$.
\end{proof}

\section{Plan of the proof of \hyperref[main-theorem]{Main Theorem}%
  \label{plan-of-the-proof-of-main-theorem}%
}

We will construct the limit cycles as the lifts of loops in the infinite line.
Note that if the monodromy map $\mathbf{M}_{k_{1}}\mathbf{M}_{k_{2}}\ldots  \mathbf{M}_{k_l}:S\rightarrow S$ has a fixed point $p\neq 0$, then the corresponding loop $\gamma =\gamma _{k_l}\gamma _{k_{l-1}}\ldots  \gamma _{k_{1}}$ lifts to a limit cycle $c$ starting from $p$;  the projection of  $c$ to the infinite line is~$\gamma $.

We proceed in two steps. First we construct contracting monodromy maps that satisfy \hyperref[inclusion]{\textbf{inclusion}}, \hyperref[contraction]{\textbf{contraction}} and \hyperref[covering]{\textbf{covering}} assumptions formulated below. This is done in a~different way for two types of genericity assumptions, see Section “\hyperref[construction-of-contracting-maps]{Construction of contracting maps}”. Then we use the maps constructed on the first step to obtain limit cycles that satisfy the assertions of \hyperref[main-theorem]{Main Theorem}. On this step we use no information about the foliation except for existence of maps with prescribed properties, see Section “\hyperref[construction-of-limit-cycles]{Construction of limit cycles}”.

\subsection{Step 1: contracting maps%
  \label{step-1-contracting-maps}%
}

We shall find two topological discs $\Delta ^{-}\subset \Delta ^{+}\subset S$ in the cross-section, $0\notin \Delta ^{+}$, an analytic chart $\zeta $ in $\Delta ^{+}$ and a tuple of monodromy maps $f_j$ with the following properties. Each $f_j$ is a composition of standard generators $\mathbf{M}_k$ of the monodromy group at infinity. For any splitting of this composition into two parts $f_j = f_j^{(t)}\circ  f_j^{(h)}$, we will say that $f_j^{(t)}$ is a \emph{tail} of $f_j$ and $f_j^{(h)}$ is its \emph{head}.
\begin{description}
\item[{Inclusion:}] \leavevmode 
\phantomsection\label{inclusion}
$f_j(\Delta ^{+})\subset \Delta ^{+}$ for any $j$.

\item[{Contraction:}] \leavevmode 
\phantomsection\label{contraction}
All compositions of the form $f_i^{(t)}\circ  f_j^{(h)}$, $f_i^{(h)}\neq \id$, $f_j^{(h)}\neq \id$, contract in $(f_j^{(h)})^{-1}\circ f_i^{(h)}(\Delta ^{+})\cap \Delta ^{+}$ with respect to the chart $\zeta $. In particular, all $f_j$ contract in $\Delta ^{+}$.

\item[{Covering:}] \leavevmode 
\phantomsection\label{covering}
Images of $\Delta ^{-}$ under $f_j$ cover $\Delta ^{-}$.

\end{description}

We will also suppose that the compositions $f_j$ do not contain identical subcompositions, otherwise we remove them. Obviously this does not break any of other requirements on $f_j$.

\subsection{Step 2: limit cycles%
  \label{step-2-limit-cycles}%
}

Here we use the maps $f_j$ to construct infinitely many homologically independent limit cycles. We will use not a particular construction of $f_j$, but only the assumptions \hyperref[inclusion]{\textbf{inclusion}},  \hyperref[contraction]{\textbf{contraction}} and \hyperref[covering]{\textbf{covering}}, so the arguments work for both sets $A^{LC1}_n, A^{LC2}_n$.  The main motivation is the following lemma\DUfootnotemark{id16}{id17}{1}.
\DUfootnotetext{id17}{id16}{1}{%
Some people attribute this statement to Hutchinson \cite{H81}, but we failed to find exactly this statement in this article.
}
\begin{lemma}\phantomsection\label{hutchinson}
Under assumptions above, for an open $U\subset \Delta ^{-}$ and $\varepsilon >0$  there exists a word $w = j_{1} \ldots  j_N$ such that the monodromy map $f_w=f_{j_{1}}\circ f_{j_{2}}\circ \ldots \circ  f_{j_N}$ satisfies $f_w(\Delta ^{+})\subset U$ and $|f'_w|<\varepsilon $ in $\Delta ^{+}$.
\end{lemma}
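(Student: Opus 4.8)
The plan is to iterate the \textbf{covering} and \textbf{contraction} assumptions to produce an arbitrarily fine cover of $\Delta^-$ by images of $\Delta^+$ with arbitrarily small derivative. First I would set up the combinatorial bookkeeping: for a word $w=j_1\ldots j_N$ write $f_w=f_{j_1}\circ\cdots\circ f_{j_N}$, and note that \textbf{inclusion} gives $f_w(\Delta^+)\subset\Delta^+$ for every word, so all these compositions are defined on $\Delta^+$ and the images are nested in the obvious tree structure. The key quantitative input is \textbf{contraction}: each $f_j$ is a contraction of $\Delta^+$ in the $\zeta$-chart with some factor $q_j<1$; setting $q=\max_j q_j<1$, an induction on word length gives $|f_w'|\le q^N$ in the $\zeta$-chart on $\Delta^+$ (the cross terms $f_i^{(t)}\circ f_j^{(h)}$ in the contraction hypothesis are exactly what is needed to keep the product of factors under control when the pieces are concatenated — this is why the hypothesis is stated for heads and tails rather than just for the $f_j$ themselves, and where I expect to have to be slightly careful). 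Since $\zeta$ is a fixed analytic chart on the relatively compact $\Delta^+$, smallness of $|f_w'|$ in the $\zeta$-chart is equivalent to smallness in the original chart up to a bounded factor, so $|f_w'|<\varepsilon$ in $\Delta^+$ as soon as $N$ is large enough, uniformly in the word.

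Next I would use \textbf{covering} to control the location of the image. By hypothesis $\bigcup_j f_j(\Delta^-)\supset\Delta^-$; applying this repeatedly, $\bigcup_{|w|=N} f_w(\Delta^-)\supset\Delta^-$ for every $N$. Meanwhile, by the contraction estimate each $f_w(\Delta^+)$ has diameter $\le C q^N\to 0$. Now pick any point $x_0\in U$; since $U\subset\Delta^-$ is open and nonempty, there is a ball $B(x_0,\rho)\subset U$ (with $\rho$ small, measured in the ambient chart). Choose $N$ so large that simultaneously $Cq^N<\varepsilon$ and $\operatorname{diam} f_w(\Delta^+)<\rho$ for all words $w$ of length $N$. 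By the covering property there is a word $w$ of length $N$ with $x_0\in f_w(\Delta^-)\subset f_w(\Delta^+)$; for that word the diameter bound forces $f_w(\Delta^+)\subset B(x_0,\rho)\subset U$. This $w$ then satisfies both conclusions of the lemma.

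The one genuine obstacle is the contraction bookkeeping in the first paragraph: one must verify that the head/tail form of the \textbf{contraction} hypothesis really does yield a uniform geometric bound $|f_w'|_\zeta\le q^{|w|}$ for \emph{arbitrary} concatenations, not merely that each individual $f_j$ contracts. The point is that when estimating $f_w'$ at a point $z$ by the chain rule, each intermediate factor $f_{j_k}'$ is evaluated at the point $f_{j_{k+1}}\circ\cdots\circ f_{j_N}(z)\in f_{j_{k+1}}(\Delta^+)$; writing the relevant two-step composition as a tail of $f_{j_k}$ followed by a head of $f_{j_{k+1}}$ and invoking the contraction hypothesis on $(f_{j_{k+1}}^{(h)})^{-1}\circ f_{j_k}^{(h)}(\Delta^+)\cap\Delta^+$ gives the needed bound on that factor; the assumption that the $f_j$ contain no identical subcompositions guarantees the heads in question are nontrivial so the hypothesis applies. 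Once this telescoping is done cleanly, the rest is the elementary covering-plus-shrinking argument above. (A cleaner alternative, if one is willing to pass to a metric adapted to the $\zeta$-chart: the maps $f_j$ are uniform contractions of the complete metric space $\overline{\Delta^+}$ in that metric, so the standard iterated-function-system estimate applies verbatim; but the head/tail version is what is actually used later, so I would prove it directly.)
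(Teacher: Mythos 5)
Your proposal is correct and follows essentially the same route as the paper: iterate the \textbf{covering} assumption to find a length-$N$ word whose image of $\Delta^-$ contains a chosen point of $U$, then use \textbf{inclusion} plus \textbf{contraction} to make $\operatorname{diam} f_w(\Delta^+)$ and $|f_w'|$ geometrically small. Your worry about the head/tail form of the contraction hypothesis is unnecessary here — since $f_w$ is a concatenation of whole maps $f_j$ and \textbf{inclusion} keeps every intermediate image inside $\Delta^+$, the clause ``in particular, all $f_j$ contract in $\Delta^+$'' already gives $|f_w'|_\zeta\le q^N$ by the chain rule; the head/tail version is only needed later for subcycles.
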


\DUadmonition[note]{
\DUtitle[note]{Note}

We will use this lemma only  for $\varepsilon <1$. In this case, the map $f_w$ obviously has a fixed point in $U$. It corresponds to a limit cycle with arbitrarily small multiplier which passes through~$U$.
}

This lemma enables us to prove assertions b)–d) of \hyperref[main-theorem]{Main Theorem}. The proof of homological independence is more complicated.
\begin{proof}
Take a point $p\in U\subset \Delta ^{-}$. Due to the \hyperref[covering]{\textbf{covering}} assumption, there exists an index $j_{1}$ such that $p \in  f_{j_{1}}(\Delta ^{-})$. Now, take the preimage $f_{j_{1}} ^{-1}(p)\subset \Delta ^{-}$, and repeat the arguments; we obtain a map $f_{j_{2}}$ such that $p\in f_{j_{1}}( f_{j_{2}} (\Delta ^{-}))$.

Repeating the procedure, we get a word $w=j_{1}\, j_{2}\, \ldots  \,j_N$ such that $f_w (\Delta ^{-}) = f_{j_{1}}\circ f_{j_{2}}\circ \cdots \circ  f_{j_N} (\Delta ^{-})$ contains $p$. The diameter of the image $f_w(\Delta ^{+})$ tends to zero as $N$ tends to infinity, since all maps $f_j$ \hyperref[contraction]{contract} in $\Delta ^{+}$. So, if $N$ is large enough, the fact that $p \in  f_w (\Delta ^{+})$ would imply $f_w (\Delta ^{+})\subset U$ and $|f'_w|<\varepsilon $ in the whole $\Delta ^{+}$.
\end{proof}

\subsection{A neighborhood in $\mathcal{B}_n$%
  \label{a-neighborhood-in-n}%
}

Note that assumptions \hyperref[inclusion]{\textbf{inclusion}}, \hyperref[contraction]{\textbf{contraction}} and \hyperref[covering]{\textbf{covering}} are robust in the following sense. Considers a~foliation $\mathcal{F}\in \mathcal{A}_{n}$ that possesses a~tuple of~monodromy maps that satisfy these assumptions. Then there exists a~bidisc $D\subset \mathbb{C}^{2}$ and $\varepsilon >0$ such that any foliation $\mathcal{F}'$ of~$D$ which is $\varepsilon $-close to $\mathcal{F}$ in $D$ possesses monodromy maps that satisfy these assumptions. Since Step 2 relies only on these properties, such foliation $\mathcal{F}'$ satisfies assertions of Main Theorem. In particular, we have the following corollary.
\begin{corollary}\phantomsection\label{cor-nbhd}
Any foliation from some open neighborhood $\mathcal{U}$, $\mathcal{A}^{LC1,2}_{n}\subset \mathcal{U}\subset \mathcal{B}_{n+1}$, possesses an infinite number of limit cycles satisfying assertions a)–d) of the Main theorem.
\end{corollary}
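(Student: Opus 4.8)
The plan is to take the remark preceding the corollary seriously and make it quantitative. The Main Theorem is proved by first extracting from $\mathcal F$ a finite package of data — discs $\Delta^-\subset\Delta^+\subset S$, an analytic chart $\zeta$ on $\Delta^+$, and monodromy maps $f_1,\dots,f_m$, each a word in the standard generators $\mathbf M_k$ — and then running Step~2, which by construction uses \emph{only} the three properties \hyperref[inclusion]{\textbf{inclusion}}, \hyperref[contraction]{\textbf{contraction}}, \hyperref[covering]{\textbf{covering}} (via \hyperref[hutchinson]{Lemma~\ref{hutchinson}} and the homological-independence argument). So the whole task reduces to showing that such a package persists under a perturbation of $\mathcal F$ inside $\mathcal B_{n+1}$. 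First I would fix $\mathcal F\in\mathcal A^{LC1}_n\cup\mathcal A^{LC2}_n$ together with its package and record that, since $0\notin\overline{\Delta^+}$, the lifts over $\Delta^+$ of the finitely many generating loops $\gamma_k$ occurring in $f_1,\dots,f_m$ sweep out a \emph{compact} set $K\subset\mathbb C^2$ disjoint from $L_\infty$. Then I pick a bidisc $D\subset\mathbb C^2$ containing $K$ and $\Delta^+$. This $K$ is also why assertion~c) comes for free below: every limit cycle produced by Step~2 is a concatenation of such $\Delta^+$-lifts, hence lies in $K\subset D$, so all of them lie in one ball.

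Second, I would invoke the standard $C^1$ (indeed analytic) dependence of the holonomy along a fixed path on the vector field near that path: if $\mathcal F'\in\mathcal B_{n+1}$ is sufficiently $C^1$-close to $\mathcal F$ on $D$, then the holonomy maps $f_j'$ of $\mathcal F'$ along the same loops are defined on $\Delta^+$ and $C^1$-close to $f_j$ there. Next I check that the three requirements are \emph{open} conditions: \hyperref[inclusion]{\textbf{inclusion}} is the relation $f_j(\overline{\Delta^+})\subset\Delta^+$ between a compact set and an open set; \hyperref[contraction]{\textbf{contraction}} asserts $\sup|(\zeta\circ(f_i^{(t)}\circ f_j^{(h)})\circ\zeta^{-1})'|<1$ over a compact set; and \hyperref[covering]{\textbf{covering}} says the open sets $f_j(\Delta^-)$ form a finite cover of the compact set $\overline{\Delta^-}$. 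Each is stable under a small $C^1$-perturbation of the $f_j$, keeping the \emph{same} $\Delta^\pm$ and the \emph{same} chart $\zeta$. Combining with the previous step gives $\varepsilon(\mathcal F)>0$ such that every $\mathcal F'\in\mathcal B_{n+1}$ with $\|\mathcal F'-\mathcal F\|_{C^1(D)}<\varepsilon(\mathcal F)$ possesses monodromy maps $f_j'$ satisfying all three assumptions.

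Third, I would simply rerun Step~2 for $\{f_j'\}$ — \hyperref[hutchinson]{Lemma~\ref{hutchinson}} applied with multiplier bound $<1$ to a shrinking family of open sets $U\subset\Delta^-$, together with the homological-independence argument — to obtain for $\mathcal F'$ an infinite homologically independent sequence of limit cycles with multipliers tending to $0$ and meeting a fixed cross-section in a dense set; boundedness (assertion c) holds as noted. Finally I set $\mathcal U$ to be the union over $\mathcal F\in\mathcal A^{LC1,2}_n$ of the $\varepsilon(\mathcal F)$-balls about $\mathcal F$ in $\mathcal B_{n+1}$: this is open and contains $\mathcal A^{LC1,2}_n$, which is the claim.

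The one point that needs genuine care — and the only real obstacle — is giving meaning to ``monodromy maps of $\mathcal F'$'' when $\mathcal F'\in\mathcal B_{n+1}$ need not share the structure of $\mathcal F$ at infinity (higher degree, possibly dicritical, a different configuration of singular points on $L_\infty$). The resolution is precisely the choice of $D$: the loops $\gamma_k$ and all their $\Delta^+$-lifts are contained in the bounded region $D$, bounded away from $L_\infty$, so each $f_j'$ is well-defined from $\mathcal F'|_D$ alone and never refers to infinity, and continuity of holonomy then does the rest. A secondary, purely bookkeeping issue is that if $\zeta$ was built from Fatou coordinates at a singular point of $\mathcal F$ at infinity, one keeps this \emph{same} biholomorphism $\zeta$ of $S\cap D$ for $\mathcal F'$ rather than reconstructing it; contraction relative to a fixed chart is still an open condition, so nothing is lost.
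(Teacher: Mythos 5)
Your proposal is correct and follows essentially the same route as the paper: the text preceding the corollary argues exactly that the \textbf{inclusion}, \textbf{contraction} and \textbf{covering} assumptions are robust under perturbation of the foliation on a suitable bidisc $D$, and that Step~2 uses nothing else. You merely spell out the details (compactness of the swept-out set $K$, openness of the three conditions, continuity of holonomy, and why boundedness and the meaning of ``monodromy of $\mathcal F'$'' are unproblematic) that the paper leaves implicit.
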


\section{Construction of contracting maps%
  \label{construction-of-contracting-maps}%
}

\subsection{Multiplicative density%
  \label{multiplicative-density}%
}

We put the following genericity assumptions on the foliation:
\begin{itemize}

\item 
\phantomsection\label{density-condition}
the characteristic numbers of two of the singular points at infinity (say, $\lambda _{1}$ and $\lambda _{2}$) generate a dense subgroup in $\mathbb{C}/\mathbb{Z}$;

\item the corresponding monodromy maps $\mathbf{M}_{1}, \mathbf{M}_{2}$ do not commute.

\end{itemize}

Each condition defines a~full measure set. For the former condition it is clear, and for the latter one see \hyperref[scherbakov-unsolvable]{Lemma 4}.

After a holomorphic coordinate change, we may and will assume that the map $\mathbf{M}_{1}$ is linear. If this map expands, let us replace it with its inverse. Then $\Im \lambda _{1}>0$. Let us pass to the chart $\zeta =\frac{\log z}{2\pi i}$, $\zeta \in \mathbb{C}/\mathbb{Z}$. In this chart, points with large $\Im \zeta $ correspond to points $z$ close to the origin.

Let ${\tilde{\mathbf{M}}}_{1}:\zeta \mapsto \zeta +\lambda _{1}$ and ${\tilde{\mathbf{M}}}_{2}$ be the maps $\mathbf{M}_{1}$ and $\mathbf{M}_{2}$ written in the chart $\zeta $. These maps are defined for sufficiently large $\Im \zeta $, and
\begin{align*}
{\tilde{\mathbf{M}}}_{2}(\zeta )&=\zeta +\lambda _{2}+o(1),\\
{\tilde{\mathbf{M}}}_{2}'(\zeta )&=1+o(1)
\end{align*}
as $\Im \zeta \rightarrow \infty $. Since $\mathbf{M}_{1}$ does not commute with $\mathbf{M}_{2}$, the map ${\tilde{\mathbf{M}}}_{2}$ is not linear, hence ${\tilde{\mathbf{M}}}_{2}'$ is not identically one. Let $\Delta ^{+}$ be a~small closed disc such that either ${\tilde{\mathbf{M}}}_{2}$ or its inverse uniformly contracts in $\Delta ^{+}$. Without loss of generality we can and shall assume that it is ${\tilde{\mathbf{M}}}_{2}$,
\begin{equation}
\max_{\zeta \in \Delta ^{+}}|{\tilde{\mathbf{M}}}_{2}'(\zeta )|<1.
\phantomsection
\label{m2-contracts}
\end{equation}
Next, let $\zeta _{0}$ be the center of $\Delta ^{+}$, put $T=\zeta _{0}-{\tilde{\mathbf{M}}}_{2}(\zeta _{0})$. Note that ${\tilde{\mathbf{M}}}_{2}(\Delta ^{+})+T\Subset \Delta ^{+}$. Choose a~much smaller disc $\Delta ^{-}\subset \Delta ^{+}$ with the same center,
\begin{equation}
\diam(\Delta ^{-})<\dist({\tilde{\mathbf{M}}}_{2}(\partial \Delta ^{+})+T, \partial \Delta ^{+}).
\phantomsection
\label{delta-minus-small}
\end{equation}
Choose a~tuple of vectors $T_j=k_j\lambda _{1}+l_j\lambda _{2}\in \mathbb{C}/\mathbb{Z}$ such that
\begin{equation}
\Delta ^{-}\subset \bigcup _j ({\tilde{\mathbf{M}}}_{2}(\Delta ^{-})+T_j)\subset \bigcup _j ({\tilde{\mathbf{M}}}_{2}(\Delta ^{+})+T_j)\subset \Delta ^{+}.
\phantomsection
\label{tj}
\end{equation}
Due to \eqref{delta-minus-small}, it is enough to take $T_j$ such that ${\tilde{\mathbf{M}}}_{2}(\Delta ^{-})+T_j$ cover $\Delta ^{-}$ and $|T-T_j|<\diam(\Delta ^{-})$. Due to the \hyperref[density-condition]{density condition}, these $T_j$ can be chosen of the form $T_j=k_j\lambda _{1}+l_j\lambda _{2}$.

Now, let us choose $f_j$ so that they approximate the maps $T_j\circ {\tilde{\mathbf{M}}}_{2}$ in $\Delta ^{+}$. Note that the map ${\tilde{\mathbf{M}}}_{2}^{l_j}\circ {\tilde{\mathbf{M}}}_{1}^{k_j}$ approximates the shift $\zeta \mapsto \zeta +T_j$ for large $\Im \zeta $, hence for $N$ large enough, the map ${\tilde{\mathbf{M}}}_{1}^{-N}\circ {\tilde{\mathbf{M}}}_{2}^{l_j}\circ {\tilde{\mathbf{M}}}_{1}^{k_j+N}$ is very close to the shift by $T_j$ in  $C^{1}({\tilde{\mathbf{M}}}_{2}(\Delta ^{+}))$. Therefore, we can take the maps
\begin{equation*}
f_j=(\mathbf{M}_{1}^{-N}\circ \mathbf{M}_{2}^{l_j}\circ \mathbf{M}_{1}^{k_j+N})\circ \mathbf{M}_{2}.
\end{equation*}
Let ${\tilde{f}}_j$ be the map $f_j$ written in the chart~$\zeta $.

For sufficiently large $N$, these maps satisfy \hyperref[inclusion]{\textbf{inclusion}} and \hyperref[covering]{\textbf{covering}} assumptions from the \hyperref[plan-of-the-proof-of-main-theorem]{plan of the proof}. Obviously, each~${\tilde{f}}_j$ contracts in~$\Delta ^{+}$.  Now, we have to prove \hyperref[contraction]{\textbf{contraction}} assumption, i.e. that for $N$ large enough all compositions ${\tilde{f}}_i^{(t)}\circ {\tilde{f}}_j^{(h)}$ contract. Recall that ${\tilde{f}}_j^{(h)}\neq \id$, ${\tilde{f}}_i^{(t)}\neq {\tilde{f}}_i$.

Since the map ${\tilde{\mathbf{M}}}_{1}^{-N}\circ {\tilde{\mathbf{M}}}_{2}^{l_j}\circ {\tilde{\mathbf{M}}}_{1}^{k_j+N}$ and its heads approximate shifts in $C^{1}(\Delta ^{+})$, for $N$ large enough the derivative of ${\tilde{f}}_j^{(h)}$ is close to the set ${\tilde{\mathbf{M}}}_{2}'(\Delta ^{+})$. On the other hand, the derivative of ${\tilde{f}}_i^{(t)}$ on the set ${\tilde{f}}_i^{(h)}(\Delta ^{+})$  is arbitrarily close to one. Thus we can make the derivative of the composition ${\tilde{f}}_i^{(t)}\circ {\tilde{f}}_j^{(h)}$  on the set $\Delta ^{+}\cap \left(({\tilde{f}}_j^{(h)})^{-1}\circ {\tilde{f}}_i^{(h)}(\Delta ^{+})\right)$ arbitrarily close to ${\tilde{\mathbf{M}}}_{2}'(\Delta ^{+})$. Now \eqref{m2-contracts} yields \hyperref[contraction]{\textbf{contraction}} assumption.

\subsection{Unsolvable monodromy group%
  \label{unsolvable-monodromy-group}%
}

In this case, the construction is similar, but instead of the logarithmic chart we use the \hyperref[fatou-coordinates]{Fatou chart} for one of the parabolic monodromy maps, and there are more technical difficulties.

\subsubsection{Genericity assumptions and preliminary considerations%
  \label{genericity-assumptions-and-preliminary-considerations}%
}

Let $\mathcal{A}_{n}^{LC2}\subset \mathcal{A}_{n}'$ be the set of polynomial foliations such that
\begin{itemize}

\item $g_{1}≔[\mathbf{M}_{1},\mathbf{M}_{2}]\in A_{1}$, $g_{2}≔[\mathbf{M}_{1}^{-1},\mathbf{M}_{2}]\in A_{1}$;

\item $g_{3}≔[g_{2},g_{1}]$ is not the identity map\DUfootnotemark{id19}{id20}{2};

\item $\mu _{1}\notin \mathbb{R}$;

\item the numbers $1$, $\mu _{1}$, $\mu _{1}^{-1}$, $\mu _{2}^{-1}$, $\mu _{1}^{-1}\mu _{2}^{-1}$, $\mu _{1}\mu _{2}^{-1}$ are all different.

\end{itemize}
\DUfootnotetext{id20}{id19}{2}{%
\hyperref[unsolvable-details]{Lemma 3} implies that for a~generic~$\mathcal{F}$ we have $g_{3}\in A_{3}$, but our construction works for a~slightly broader set of~foliations.
}

Due to \hyperref[unsolvable-details]{Lemma 3} and the fact that $\sum \lambda _i=1$ is the only relation on $\lambda _i$, the complement $\mathcal{A}_{n}\smallsetminus \mathcal{A}_{n}^{LC2}$ is a~real analytic subset of $\mathcal{A}_{n}$.

Consider a~foliation~$\mathcal{F}$ from this set. Put $g_{4}≔[g_{3},g_{2}]$. Due to \hyperref[lemma-parabolic-commutator]{Lemma 2}, $g_{4}\neq \id$.

Let $\zeta $ be a \hyperref[fatou-coordinates]{Fatou chart} for $g_{1}$ in the negative sector.

\DUtopic[]{
\DUtitle[title]{Convention}

In this Section, tilde above means that a~map is written in the chart~$\zeta $. In particular, ${\tilde{g}}_{1}(\zeta )= \zeta +1$.
}

\hyperref[lemma-parabolic-commutator]{Lemma 2} and Item b) of \hyperref[inzetachart]{Lemma 1} imply
\begin{align*}
g_{1}(z)&=z+\frac{g_{1}''(0)}{2}z^{2}+o(z^{2})&{\tilde{g}}_{1}(\zeta )&=\zeta +1+o(1)\\
g_{2}(z)&=z+\frac{g_{2}''(0)}{2}z^{2}+o(z^{2})&{\tilde{g}}_{2}(\zeta )&=\zeta +\frac{g_{2}''(0)}{g_{1}''(0)}+o(1)\\
g_{3}(z)&=z+az^{p+1}+o(z^{p+1})&{\tilde{g}}_{3}(\zeta )&=\zeta -\frac{(-2)^pa}{g_{1}''(0)^p}\zeta ^{1-p}+o(\zeta ^{1-p})\\
g_{4}(z)&=z+\frac{a(p-1)g_{2}''(0)}2z^{p+2}+o(z^{p+2})&{\tilde{g}}_{4}(\zeta )&=\zeta +\frac{(-2)^pa(p-1)g_{2}''(0)}{g_{1}''(0)^{p+1}}\zeta ^{-p}+o(\zeta ^{-p}),
\end{align*}
where $a\in \mathbb{C}\smallsetminus \set{0}$. Put ${\tilde{a}}=-\frac{(-2)^pa}{g_{1}''(0)^p}$, ${\tilde{b}}=\frac{(-2)^pa(p-1)g_{2}''(0)}{g_{1}''(0)^{p+1}}$. Due to \eqref{commutators-d2}, $g_{2}''(0)=-\mu _{1}g_{1}''(0)$, hence
\begin{align*}
{\tilde{g}}_{1}(\zeta )&=\zeta +1+o(1)\\
{\tilde{g}}_{2}(\zeta )&=\zeta -\mu _{1}+o(1)\\
{\tilde{g}}_{3}(\zeta )&=\zeta +\frac{{\tilde{a}}}{\zeta ^{p-1}}+o\left(\frac{1}{\zeta ^{p-1}}\right)\\
{\tilde{g}}_{4}(\zeta )&=\zeta +\frac{{\tilde{b}}}{\zeta ^p}+o\left(\frac{1}{\zeta ^p}\right).
\end{align*}
Since $\frac{{\tilde{b}}}{{\tilde{a}}}=-\frac{(p-1)g_{2}''(0)}{g_{1}''(0)}=(p-1)\mu _{1}\notin \mathbb{R}$, each number $T\in \mathbb{C}$ can be represented as
\begin{equation}
T=\xi (T){\tilde{a}}+\eta (T){\tilde{b}}.
\phantomsection
\label{xi-eta}
\end{equation}
We will construct $f_j$ as compositions of the maps $g_{1}, g_{2}, g_{3}, g_{4}$.

\subsubsection{Construction of $f_j$%
  \label{construction-of-f-j}%
}

Let $\Delta ^{+}$ be a small disc that we shall choose later. Now we just say that ${\tilde{g}}_{2}^\pm \in \set{{\tilde{g}}_{2},{\tilde{g}}_{2}^{-1}}$ contracts in $\Delta ^{+}$,
\begin{equation}
\max_{\zeta \in \Delta ^{+}}|({\tilde{g}}_{2}^\pm )'(\zeta )|=q<1.
\phantomsection
\label{g2-contracts}
\end{equation}
Since ${\tilde{g}}_{2}(\zeta )-\zeta \rightarrow -\mu _{1}$ as $\zeta \rightarrow \infty $, we may and will assume that $|{\tilde{g}}_{2}^\pm (\zeta )-\zeta |<|\mu _{1}|+1$ for $\zeta \in \Delta ^{+}$.

As in the \hyperref[multiplicative-density]{previous case}, take a~disc $\Delta ^{-}\subset \Delta ^{+}$ and a~tuple of vectors $T_j\in \mathbb{C}$ such that
\begin{equation}
\Delta ^{-}\subset \bigcup _j ({\tilde{g}}_{2}^\pm (\Delta ^{-})+T_j)\subset \bigcup _j ({\tilde{g}}_{2}^\pm (\Delta ^{+})+T_j)\subset \Delta ^{+}.
\phantomsection
\label{g2-tj}
\end{equation}
It is easy to see that the right inclusion implies $|T_j+\mu _{1}|<2$ or $|T_j-\mu _{1}|<2$, hence $|T_j|<|\mu _{1}|+2$. Put $\xi _j=\xi (T_j)$, $\eta _j=\eta (T_j)$, see \eqref{xi-eta}.

Similarly to the \hyperref[multiplicative-density]{previous case}, we will choose the compositions ${\tilde{f}}_j$ so that they will approximate the maps $\zeta \mapsto {\tilde{g}}_{2}^\pm (\zeta )+T_j$ in $C^{1}(\Delta ^{+})$. It turns out that we can use the compositions ${\tilde{f}}_j={\tilde{F}}_j\circ {\tilde{g}}_{2}^\pm $, where
\begin{equation}
{\tilde{F}}_j ≔ {\tilde{g}}_{1}^{-N}\circ {\tilde{g}}_{3}^{k_j}\circ {\tilde{g}}_{4}^{l_j}\circ {\tilde{g}}_{1}^{N},\quad k_j=[N^{p-1}\xi _j],\quad l_j=[N^p\eta _j].
\phantomsection
\label{f-j}
\end{equation}
Here $N$ is a large number that we will choose later.

Let us prove that ${\tilde{F}}_j$ approximate the translations $\zeta \mapsto \zeta +T_j$ in $C^{1}({\tilde{g}}_{2}^\pm (\Delta ^{+}))$.
\begin{lemma}\phantomsection\label{f-j-head}
For $N$ large enough, each head ${\tilde{F}}_j^{(h)}$ such that $\left(F_j^{(h)}\right)'(0)=1$ is close to a~translation $\zeta \mapsto \zeta +T$ in $C^{1}({\tilde{g}}_{2}^\pm (\Delta ^{+}))$. Moreover, $\Re T>-|\Re \mu _{1}|-2$ and $|\Im T|$ is bounded by a~number that does not depend on $\Delta ^{+}$.

In particular,
\setcounter{listcnt0}{0}
\begin{list}{\alph{listcnt0})}
{
\usecounter{listcnt0}
\setlength{\rightmargin}{\leftmargin}
}

\item ${\tilde{g}}_{1}^n$ is the translation by $n$;

\item ${\tilde{g}}_{4}^{n}\circ {\tilde{g}}_{1}^N$, $|n|\leqslant |k_j|$ is close to the translation by~$N+\frac{n{\tilde{b}}}{N^p}$;

\item ${\tilde{g}}_{3}^{n}\circ {\tilde{g}}_{4}^{l_j}\circ {\tilde{g}}_{1}^N$, $|n|\leqslant |k_j|$ is close to the translation by~$N+\frac{n{\tilde{a}}}{N^{p-1}}+{\tilde{b}}\eta _j$;

\item ${\tilde{g}}_{1}^{-n}\circ {\tilde{g}}_{3}^{k_j}\circ {\tilde{g}}_{4}^{l_j}\circ {\tilde{g}}_{1}^N$, $0\leqslant n\leqslant N$, is close to the translation by $T_j+N-n$.
\end{list}
\end{lemma}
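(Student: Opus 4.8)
The plan is to reduce the lemma to the four explicit families of heads in items a)--d), establish those by iterating the asymptotics of $\tilde{g}_1,\dots,\tilde{g}_4$ from the previous subsection, and then control the remaining boundary terms.

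\emph{Reduction to block heads.} Each of $g_1,g_2$ is a commutator of the generators $\mathbf{M}_k$, hence so are $g_3=[g_2,g_1]$ and $g_4=[g_3,g_2]$; each is a word in the $\mathbf{M}_k^{\pm1}$ of total multiplier $1$. Thus a head $F_j^{(h)}$ of $F_j=\tilde{g}_1^{-N}\circ\tilde{g}_3^{k_j}\circ\tilde{g}_4^{l_j}\circ\tilde{g}_1^{N}$ is a concatenation of complete $g_i$-blocks followed by a proper prefix of one further block, and $\left(F_j^{(h)}\right)'(0)=1$ forces the multiplier of that prefix to be $1$ as well. The nontrivial partial multipliers that occur strictly inside $g_1^{\pm1}$ and $g_2^{\pm1}$ are exactly $\mu_1,\mu_1^{-1},\mu_2^{-1},\mu_1^{-1}\mu_2^{-1},\mu_1\mu_2^{-1}$, which by the genericity assumption are all different from $1$; unwinding the nested commutators $g_3=[g_2,g_1]$, $g_4=[g_3,g_2]$ one sees that a multiplier-$1$ proper prefix of a block must terminate at a $g_1/g_2$-boundary, hence equals a word $\varphi$ of bounded length (independent of $N$ and of $\Delta^+$) in $g_1^{\pm1},\dots,g_4^{\pm1}$. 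Therefore $F_j^{(h)}=\varphi\circ H$ with $H$ one of $\tilde{g}_1^{n}$, $\tilde{g}_4^{n}\circ\tilde{g}_1^{N}$, $\tilde{g}_3^{n}\circ\tilde{g}_4^{l_j}\circ\tilde{g}_1^{N}$, $\tilde{g}_1^{-n}\circ\tilde{g}_3^{k_j}\circ\tilde{g}_4^{l_j}\circ\tilde{g}_1^{N}$ (the four cases a)--d)), and $\varphi=\id$ whenever $H$ is of type a) with $n<N$ or of type d), since a proper prefix of $g_1^{\pm1}$ has multiplier $\ne1$.

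\emph{Establishing a)--d).} Item a) is immediate, $\zeta$ being a Fatou chart for $g_1$. For b), put $w_0=\tilde{g}_1^{N}(\zeta_0)=\zeta_0+N$ with $\zeta_0\in\tilde{g}_2^{\pm}(\Delta^+)$ and iterate $w_{m+1}=\tilde{g}_4(w_m)$; since $\tilde{g}_4(w)-w=\tilde{b}w^{-p}+o(w^{-p})$ uniformly on the sector at infinity, each step displaces $w_m$ by $O(N^{-p})$, so over $|n|\lesssim N^{p}$ steps the drift is $O(1)$, every $w_m=N+O(1)$ stays deep in the sector, and telescoping with $w_m^{-p}=N^{-p}\bigl(1+O(N^{-1})\bigr)$ gives $w_n=\zeta_0+N+n\tilde{b}N^{-p}+o(1)$ as $N\to\infty$, uniformly in $\zeta_0$; the $C^1$ bound comes from $\log\tilde{g}_4'(w)=o(w^{-p})$ (item c) of Lemma~\ref{inzetachart}) summed along the orbit. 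Item c) is the same iteration applied to $\tilde{g}_3$ (increment $\tilde{a}w^{1-p}+o(w^{1-p})$, displacement $O(N^{1-p})$ per step over $|n|\lesssim N^{p-1}$ steps), started from $\tilde{g}_4^{l_j}\circ\tilde{g}_1^{N}(\zeta_0)=\zeta_0+N+\eta_j\tilde{b}+o(1)$, which is b) at $n=l_j$ combined with $l_j\tilde{b}N^{-p}=\eta_j\tilde{b}+o(1)$. Item d) then follows from c) at $n=k_j$, using $k_j\tilde{a}N^{1-p}=\xi_j\tilde{a}+o(1)$ and $\xi_j\tilde{a}+\eta_j\tilde{b}=T_j$ from \eqref{xi-eta}: $\tilde{g}_3^{k_j}\circ\tilde{g}_4^{l_j}\circ\tilde{g}_1^{N}$ is close to $\zeta\mapsto\zeta+N+T_j$, and composing with the exact shift $\tilde{g}_1^{-n}$ yields $\zeta\mapsto\zeta+T_j+N-n$; the case $n=N$ identifies $\tilde{F}_j$ with a map close to $\zeta\mapsto\zeta+T_j$.

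\emph{General heads and the stated bounds.} For $N$ large, each $H$ above is $C^1$-close to a translation whose real part is $\ge\Re T_j$, so it never moves a point of the fixed compact set $\tilde{g}_2^{\pm}(\Delta^+)$ to the left of $\Delta^+$, and $H\bigl(\tilde{g}_2^{\pm}(\Delta^+)\bigr)$ stays in the sector at infinity; there each of $g_1,g_2^{\pm},g_3,g_4$ is $C^1$-close to a translation (by $1$, by $\mp\mu_1$, by $0$, by $0$, from $\tilde{g}_1(\zeta)=\zeta+1$, $\tilde{g}_2^{\pm}(\zeta)=\zeta\mp\mu_1+o(1)$ with $(\tilde{g}_2^{\pm})'\to1$, and the analogous asymptotics for $g_3,g_4$), hence the bounded word $\varphi$ is $C^1$-close to a translation by some $T_\varphi$ with $|T_\varphi|\le C_0(\mu_1)$ absolute, and $F_j^{(h)}=\varphi\circ H$ is $C^1$-close to the translation by $T=T_H+T_\varphi$. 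If $H$ is of type a) then $\varphi=\id$ and $T=n\in[0,N]$; if $H$ is of type b) or c) then $\Re T_H=N+O(1)\ge N/2$ for $N$ large, with $|\Im T_H|$ bounded in terms of $|\mu_1|,|\tilde{a}|,|\tilde{b}|,|\xi_j|,|\eta_j|$, the last two being bounded because $|T_j|<|\mu_1|+2$; if $H$ is of type d) then $\varphi=\id$ and $T=T_j+N-n$ with $n\le N$, so $\Re T=\Re T_j+(N-n)\ge\Re T_j>-|\Re\mu_1|-2$ and $|\Im T|=|\Im T_j|<|\Im\mu_1|+2$ (using $|T_j\mp\mu_1|<2$). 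In every case $\Re T>-|\Re\mu_1|-2$ and $|\Im T|$ is bounded independently of $\Delta^+$, which is the \emph{moreover} part. The step I expect to be the main obstacle is the error accounting in b) and c): the per-iteration error is only $o(N^{-p})$, resp.\ $o(N^{1-p})$, while one iterates $\sim N^{p}$, resp.\ $\sim N^{p-1}$, times, so one must exploit that the ``$o$'' improves as $N\to\infty$, fix $\Delta^+$ before choosing $N$, and check that all intermediate points stay within $O(1)$ of $\{\Re\zeta=N\}$ so the expansions of $\tilde{g}_3,\tilde{g}_4$ apply uniformly along the whole orbit; the combinatorics of identifying which heads have multiplier $1$ is a fiddly but routine second ingredient.
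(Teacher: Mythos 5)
Your proposal is correct and follows essentially the same route as the paper: prove the explicit cases b)--d) by telescoping the orbit under $\tilde{g}_3,\tilde{g}_4$ starting from $\Re\zeta=N+O(1)$ with the derivative controlled via item c) of Lemma~\ref{inzetachart}, reduce an arbitrary multiplier-one head to one of these composed with a bounded-length word close to a translation (using that proper heads of $g_1^{\pm1},g_2^{\pm1}$ have multiplier $\ne1$), and extract the bounds on $T$ from $|T_j|<|\mu_1|+2$. Your explicit accounting of the summed $o(N^{-p})$ errors over $\sim N^p$ iterations addresses a point the paper's proof treats implicitly.
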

\begin{proof}
We shall prove this lemma only for $\xi _j>0$ and $\eta _j>0$. For other cases, it is enough to replace ${\tilde{g}}_{3}$ and (or) ${\tilde{g}}_{4}$ by its inverse map.

Let us prove the assertions of the lemma for ${\tilde{F}}_j^{(h)}={\tilde{g}}_{4}^{l_j}\circ {\tilde{g}}_{1}^N$. Recall that ${\tilde{g}}_{4}(\zeta )-\zeta ={\tilde{b}}\zeta ^{-p}+o(\zeta ^{-p})$, hence
\begin{equation*}
{\tilde{F}}_j^{(h)}(\zeta )={\tilde{g}}_{4}^{l_j}(\zeta +N)=\zeta +N+\frac{{\tilde{b}}l_j}{(\zeta +N)^p}+o(1)=\zeta +N+\frac{{\tilde{b}}[N^p\eta _j]}{N^p}+o(1)=\zeta +N+{\tilde{b}}\eta _j+o(1)
\end{equation*}
as $N\rightarrow \infty $, $\zeta \in {\tilde{g}}_{2}^\pm (\Delta ^{+})$. Therefore, ${\tilde{F}}_j^{(h)}$ is $C^{0}$-close to the translation by $N+{\tilde{b}}\eta _j$.

Let us prove that the derivative of ${\tilde{F}}_j^{(h)}$ is close to one. Due to Item c) of \hyperref[inzetachart]{Lemma 1}, $\log {\tilde{g}}_{4}'(\zeta )=o({\tilde{g}}_{4}(\zeta )-\zeta )=o(\zeta ^{-p})$ as $\zeta \rightarrow \infty $, hence
\begin{equation*}
\log\left({\tilde{F}}_j^{(h)}\right)'(\zeta )=\sum _{k=0}^{l_j-1}\log {\tilde{g}}_{4}'({\tilde{g}}_{4}^k(\zeta +N))\leqslant N^p\eta _j\log {\tilde{g}}_{4}'(N+O(1))=o(1)
\end{equation*}
as $N\rightarrow \infty $, $\zeta \in \Delta ^{+}$. Thus
\begin{equation*}
\left({\tilde{F}}_j^{(h)}\right)'(\zeta )=1+o(1).
\end{equation*}
Finally, in this case ${\tilde{F}}_j^{(h)}$ is $C^{1}$-close to~the~translation by $N+{\tilde{b}}\eta _j$.

All particular cases listed in the statement of the lemma can be proved in the same way. Also, the estimate $|T_j|<|\mu _{1}|+2$ yields a~uniform estimate on the imaginary parts of the translation vectors.

Consider a~head~${\tilde{F}}_j^{(h)}$, $\left(F_j^{(h)}\right)'(0)=1$, not listed explicitly in the statement of the lemma. Since $g_{1}$ and $g_{2}$ have no heads $g$ with $g'(0)=1$, ${\tilde{F}}_j^{(h)}$ differs from a~head of type b) or c) by a~composition with a~head ${\tilde{g}}$ either of ${\tilde{g}}_{3}^{\pm 1}$, or of ${\tilde{g}}_{4}^{\pm 1}$ such that $g'(0)=1$. Since ${\tilde{g}}$ is applied to points $\zeta $ with $\Re \zeta =N+O(1)$, it can be made arbitrarily $C^{1}$-close to its “translational part” $\zeta \mapsto \zeta +T$, $T=\lim_{\zeta \rightarrow \infty }{\tilde{g}}(\zeta )-\zeta $. Thus ${\tilde{F}}_j^{(h)}$ is close to a~translation $\zeta \mapsto \zeta +T'$ with bounded $\Im T'$ as well.
\end{proof}

\subsubsection{Choice of $\Delta ^{+}$%
  \label{choice-of}%
}

The construction relies on the following simple observation.
\begin{lemma}\phantomsection\label{u-exists}
For a~collection of~hyperbolic maps $F_j:(\mathbb{C}, 0)\rightarrow (\mathbb{C}, 0)$, $F_j'(0)\neq 1$, there exists an~arbitrarily thick strip
\begin{equation}
U=\Set{\zeta |\Re \zeta >R, n_{-}<\Im \zeta <n_{+}},\quad n_{+}-n_{-}>C,
\phantomsection
\label{u-thick}
\end{equation}
such that $U$ does not overlap its images under ${\tilde{F}}_j$.
\end{lemma}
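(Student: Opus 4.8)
The plan is to replace each $\tilde F_j$ by its affine model near infinity and then to build the half-strip $U$ by fixing, in this order, its width, its height, and finally its left edge $R$. Applying Item a) of \hyperref[inzetachart]{Lemma 1} to each $F_j$ (whose linear part is $F_j'(0)\,z$) gives $\tilde F_j(\zeta)=\rho_j\zeta+c_j+o(1)$ as $\zeta\to\infty$ inside a sector at infinity, with $\rho_j\coloneqq F_j'(0)^{-1}\notin\{0,1\}$ and $c_j\in\mathbb{C}$; since the collection is finite, these asymptotics are uniform in $j$. The only feature of the shape \eqref{u-thick} that matters is that on such a strip $|\zeta|\ge\Re\zeta>R$, so that $R\to\infty$ genuinely makes the $o(1)$-terms uniformly small on the strip. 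I would then split the indices into $J_+\coloneqq\{j:\rho_j\in(0,\infty)\}$ (so $\rho_j\neq1$ there automatically) and the complementary set $J_-$, for which $\Im\rho_j\neq0$ or $\rho_j<0$.

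The case $J_+$ is the core of the argument, and it is what fixes the height of $U$. For $j\in J_+$ the imaginary coordinate is transported, to leading order, by the real affine map $\psi_j(y)=\rho_j y+\Im c_j$, whose multiplier is $\rho_j\neq1$ and whose fixed point is $y_j^\ast=\Im c_j/(1-\rho_j)$. A one-line computation in each of the cases $\rho_j>1$ and $0<\rho_j<1$ shows that an interval $I=(n_-,n_+)$ with $n_--y_j^\ast>(n_+-n_-)\,\rho_j/|1-\rho_j|$ satisfies $\psi_j(I)\cap I=\emptyset$, with a gap bounded below in terms of the same data. So I would fix the width $w\coloneqq n_+-n_->C$ arbitrarily, then take $n_-$ strictly above $\max_{j\in J_+}\bigl(y_j^\ast+w\,\rho_j/|1-\rho_j|\bigr)$ and set $n_+=n_-+w$; here finiteness of the collection is used to make this maximum finite. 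For this choice there is a single $\varepsilon_0>0$ with $\dist(\psi_j(I),I)>\varepsilon_0$ for all $j\in J_+$, so $\Im\tilde F_j(\zeta)\notin(n_-,n_+)$ for $\zeta\in U$ whenever the $o(1)$-remainder is below $\varepsilon_0$ on $U$.

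With $w$ and $n_\pm$ fixed, I would then choose $R$ large enough that: $U$ lies inside the common sector at infinity where every $\tilde F_j$ is defined and the asymptotics hold; each $o(1)$-remainder is, on $U$, smaller than $\varepsilon_0$ and than the constants entering the estimates below; and, for $j\in J_-$, the escape estimates hold. Indeed, if $\Im\rho_j\neq0$ then on $U$ we have $|\Im\tilde F_j(\zeta)|\ge|\Im\rho_j|\,\Re\zeta-\const>n_+$, hence $\tilde F_j(\zeta)\notin U$; if $\rho_j<0$ then on $U$ we have $\Re\tilde F_j(\zeta)=\rho_j\Re\zeta+\Re c_j+o(1)<R$, hence again $\tilde F_j(\zeta)\notin U$. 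Together with the $J_+$ step this yields $\tilde F_j(U)\cap U=\emptyset$ for every $j$, and since $w>C$ was arbitrary, the lemma follows.

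The statement is elementary, and the only point requiring care is the order of the quantifiers — width, then height so that $U$ sits in the common repelling zone of all the affine height-maps $\psi_j$, $j\in J_+$ (which is what pushes $n_-$ away from every $y_j^\ast$ and relies on finiteness of the collection), and only then $R$ — together with the observation that on a horizontal half-strip $|\zeta|$ is comparable to $\Re\zeta$, so that sending $R$ to infinity really does suppress the $o(1)$ errors there.
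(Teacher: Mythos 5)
Your proposal is correct and follows essentially the same route as the paper: write each $\tilde F_j$ as its affine model $\rho_j\zeta+c_j+o(1)$ via Lemma~1, place the strip far enough from the invariant horizontal line $\Im\zeta=\Im c_j/(1-\rho_j)$ of the affine part when $\rho_j$ is real, and then take $R$ large to absorb the $o(1)$-errors and to dispose of the non-real multipliers. Your version is merely more explicit about the quantifier order and about the negative-real case (which the paper folds into the real case via the imaginary coordinate), so no substantive difference.
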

\begin{proof}
Recall that ${\tilde{F}}_j(\zeta )=k_j\zeta +b_j+o(1)$, see \hyperref[inzetachart]{Lemma 1}. For a~map~$F_j$ with $k_j\in \mathbb{R}$, the affine term $\zeta \mapsto k_j\zeta +b_j$ of~${\tilde{F}}$ has invariant horizontal line $\Im \zeta =y_j≔\frac{\Im b_j}{1-k_j}$, and for $|\Im \zeta -y_j|>\frac{C}{|k-1|}$ we have $|\Im \zeta -\Im (k\zeta +b)|>C$. Consider a~strip $U$ such that $|\Im \zeta -y_j|>\frac{C}{|k_j-1|}$ whenever $k_j\in \mathbb{R}$. Clearly, for $R$ large enough, all maps~${\tilde{F}}_j$ will be close enough to their respective affine terms, hence ${\tilde{F}}_j(U)\cap U=\varnothing $. Finally, we enlarge $R$ so that the assertion is satisfied for the maps ${\tilde{F}}_j$ with $k_j\notin \mathbb{R}$.
\end{proof}

Let $C_{1}$ be the estimate on $|\Im T|$ from \hyperref[f-j-head]{Lemma 7}; put $C_{2}=\max(C_{1}, |\Re \mu _{1}|+2)$. Fix a~strip \eqref{u-thick}, $C=2C_{2}+|\Im \mu _{1}|$, such that
\begin{equation}
\left(f_i^{(h)}\right)'(0)\neq \left(f_j^{(h)}\right)'(0)\quad\Rightarrow \quad \left(f_i^{(h)}\right)^{-1}\circ \left(f_j^{(h)}\right)(U)\cap U=\varnothing .
\phantomsection
\label{u-hyperbolic}
\end{equation}
Recall that ${\tilde{g}}_{2}(\zeta )=\zeta -\mu _{1}+o(1)$. Hence there exists a~small disc $\Delta \subset U$ such that the distance between $\mathbb{C}\smallsetminus U$ and $\Delta \cup {\tilde{g}}_{2}(\Delta )$ is greater than $C_{2}$, and $|{\tilde{g}}_{2}(\zeta )-\zeta +\mu _{1}|<1$ for $\zeta \in \Delta $. Shrinking $\Delta $ if necessary, we may and will assume that $\forall \zeta \in \Delta $ we have $|{\tilde{g}}_{2}'(\zeta )|\neq 1$. If $|{\tilde{g}}_{2}'(\zeta )|<1$ in $\Delta $, then we put $\Delta ^{+}=\Delta $, $g_{2}^\pm =g_{2}$, otherwise we put $\Delta ^{+}={\tilde{g}}_{2}(\Delta )$ and $g_{2}^\pm =g_{2}^{-1}$. Then ${\tilde{g}}_{2}^\pm $ contracts in $\Delta ^{+}$, see \eqref{g2-contracts}.

Finally, since the distance between $\mathbb{C}\smallsetminus U$ and $\Delta \cup {\tilde{g}}_{2}(\Delta )$ is greater than $C_{2}$, \hyperref[f-j-head]{Lemma 7} implies that for $N$ large enough, for each head ${\tilde{F}}_j^{(h)}$ of ${\tilde{F}}_j$
\begin{equation}
\left(F_j^{(h)}\right)'(0)=1\quad\Rightarrow \quad
{\tilde{F}}_j^{(h)}(\Delta ^{+})\subset U,\qquad
\left|\left({\tilde{F}}_j^{(h)}\right)'(\zeta )\right|<\frac{1}{\sqrt{q}},\qquad
\left|\left({\tilde{F}}_j^{(t)}\right)'(\zeta )\right|<\frac{1}{\sqrt{q}}
\phantomsection
\label{u-parabolic}
\end{equation}
for $\zeta \in {\tilde{g}}_{2}^\pm (\Delta ^{+})$, where $q$ is given by \eqref{g2-contracts}.

\subsubsection{Proof of the assumptions%
  \label{proof-of-the-assumptions}%
}

Let us prove that for $N$ large enough, the compositions ${\tilde{f}}_j$ satisfy the assumptions listed in the \hyperref[plan-of-the-proof-of-main-theorem]{plan of the proof}. For assumptions \hyperref[inclusion]{\textbf{inclusion}} and \hyperref[covering]{\textbf{covering}}, this immediately follows from \hyperref[f-j-head]{Lemma 7} and the definition of $T_j$.

Let us prove that \eqref{u-hyperbolic} and \eqref{u-parabolic} imply the \hyperref[contraction]{\textbf{contraction}} property. Consider a~composition of the form ${\tilde{f}}_i^{(t)}\circ {\tilde{f}}_j^{(h)}$. Recall that ${\tilde{f}}_i$ and ${\tilde{f}}_j$ are compositions of the commutators ${\tilde{g}}_{1}^{\pm 1}$ and ${\tilde{g}}_{2}^{\pm 1}$. Therefore, we can rewrite ${\tilde{f}}_i^{(h)}$, ${\tilde{f}}_j^{(h)}$ and the corresponding tails as
\begin{align*}
{\tilde{f}}_i^{(h)}&=\left({\tilde{g}}_k^{\pm 1}\right)^{(h)}\circ {\tilde{f}}_i^{(ph)}&{\tilde{f}}_i^{(t)}&={\tilde{f}}_i^{(pt)}\circ \left({\tilde{g}}_k^{\pm 1}\right)^{(t)},\\
{\tilde{f}}_j^{(h)}&= \left({\tilde{g}}_l^{\pm 1}\right)^{(h)}\circ {\tilde{f}}_j^{(ph)},&{\tilde{f}}_j^{(t)}&={\tilde{f}}_j^{(pt)}\circ \left({\tilde{g}}_k^{\pm 1}\right)^{(t)},
\end{align*}
where
\begin{itemize}

\item $f_i^{(ph)}$, $f_i^{(pt)}$, $f_j^{(ph)}$, $f_j^{(pt)}$ are compositions of $g_{1}^{\pm 1}$ and $g_{2}^{\pm 1}$;

\item $\set{k, l}\subset \set{1, 2}$;

\item $\left(g_k^{\pm 1}\right)^{(h)}$ and $\left(g_l^{\pm 1}\right)^{(h)}$ may be empty but may not coincide with $g_k^{\pm 1}$ or $g_l^{\pm 1}$.

\end{itemize}

If the maps~$\left(g_k^{\pm 1}\right)^{(h)}$ and~$\left(g_l^{\pm 1}\right)^{(h)}$ have different multipliers, then \eqref{u-hyperbolic} and \eqref{u-parabolic} imply that $(f_j^{(h)})^{-1}\circ f_i^{(h)}(\Delta ^{+})\cap \Delta ^{+}=\varnothing $.

Next, suppose that $\left(g_k^{\pm 1}\right)^{(h)}$ and~$\left(g_l^{\pm 1}\right)^{(h)}$ have equal multipliers. It is easy to check that our assumption on~$\mu _{1}$, $\mu _{2}$ implies that in this case $\left(g_k^{\pm 1}\right)^{(t)}\circ \left(g_l^{\pm 1}\right)^{(h)}$ is one of the maps $\id$, $g_k^{\pm 1}$, $g_l^{\pm 1}$. In the first case, we just eliminate the middle part from
\begin{equation*}
f_i^{(t)}\circ f_j^{(h)}={\tilde{f}}_i^{(pt)}\circ \left[\left({\tilde{g}}_k^{\pm 1}\right)^{(t)}\circ \left({\tilde{g}}_l^{\pm 1}\right)^{(h)}\right]\circ {\tilde{f}}_j^{(ph)},
\end{equation*}
and in the two latter cases we can regard the middle part either as a~part of~$f_i^{(t)}$, or as a part of $f_j^{(h)}$. Hence we can assume that both $f_i^{(t)}$ and $f_j^{(h)}$ are parabolic maps.

Finally, due to \eqref{u-parabolic}, for parabolic $f_i^{(t)}$ and $f_j^{(h)}$ we have
\begin{equation*}
\left|\left(f_i^{(t)}\circ f_j^{(h)}\right)'(\zeta )\right|<q\times \frac{1}{\sqrt{q}}\times \frac{1}{\sqrt{q}}=1.
\end{equation*}
Hence, the maps $f_j$ satisfy the \hyperref[contraction]{\textbf{contraction}} requirement.

\section{Construction of limit cycles%
  \label{construction-of-limit-cycles}%
}

Consider a polynomial foliation $\mathcal{F}\in \mathcal{A}_n'$. Suppose that there exist domains $\Delta ^{-}\subset \Delta ^{+}$, a chart $\zeta $ and a tuple of monodromy maps $f_j$ that satisfy the assumptions listed in the \hyperref[plan-of-the-proof-of-main-theorem]{plan of the proof}. In this section we shall show that such foliation satisfies the assertions of \hyperref[main-theorem]{Main Theorem}. The proof is based on the following simple observation.
\begin{lemma}\phantomsection\label{independent-cycles}
Suppose that a collection of limit cycles $c_j$ satisfies the following:
\setcounter{listcnt0}{0}
\begin{list}{\alph{listcnt0})}
{
\usecounter{listcnt0}
\setlength{\rightmargin}{\leftmargin}
}

\item all cycles $c_j$ are simple, i.e., have no self-intersections;

\item their multipliers $\mu (c_j)$ satisfy $0<|\mu (c_j)|<|\mu (c_{1})\cdots \mu (c_{j-1})|$;

\item $c_i\cap c_j=\varnothing $ for $i\neq j$.
\end{list}

Then these cycles are homologically independent.
\end{lemma}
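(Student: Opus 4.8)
The plan is to argue by contradiction on the first nontrivial relation. Suppose the cycles $c_j$ are not homologically independent; then for some leaf $L$ there is a finite linear relation $\sum_{j\in J} m_j[c_j]=0$ in $H_1(L)$ with integer coefficients $m_j$, not all zero, where all $c_j$, $j\in J$, lie on $L$. Let $k=\max J$ be the largest index appearing with $m_k\neq 0$. The idea is that the holonomy of $L$, restricted to the cross-section $S$, gives a homomorphism $\mathbf h\colon H_1(L)\to \operatorname{Germ}(\mathbb C,0)$, and we will extract a contradiction from the multipliers by pushing the relation through the derivative-at-$0$ character $g\mapsto g'(0)$.

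First I would set up the holonomy character precisely. Since all $c_j$ are simple and pairwise disjoint (assumptions a) and c)), each $[c_j]$ is represented by an embedded loop; the holonomy along $c_j$ is, by the definition of limit cycle, a germ at a point of $S\cap c_j$ with multiplier $\mu(c_j)$. Transporting all base points to the fixed section $S$ along paths in $L$ (which does not change multipliers, since conjugation preserves the derivative at the fixed point), we get germs $\mathbf h_j$ at $0\in S$ with $\mathbf h_j'(0)=\mu(c_j)$. Applying the homomorphism $H_1(L)\to\mathbb C^\times$, $[\gamma]\mapsto \mathbf h_\gamma'(0)$ (well defined because $g\mapsto g'(0)$ is multiplicative) to the relation $\sum_{j\in J} m_j[c_j]=0$ yields $\prod_{j\in J}\mu(c_j)^{m_j}=1$.

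Now I would exploit assumption b), the lacunary growth $0<|\mu(c_j)|<|\mu(c_1)\cdots\mu(c_{j-1})|$, to show $\prod_{j\in J}\mu(c_j)^{m_j}=1$ is impossible. Taking absolute values and writing $\mu_j=|\mu(c_j)|$, set $P_j=\mu_1\cdots\mu_j$ so that $\mu_j<P_{j-1}$, i.e. $\mu_j/P_{j-1}<1$. The relation gives $\prod_{j\in J}\mu_j^{m_j}=1$. I would isolate the top index: $\mu_k^{m_k}=\prod_{j\in J,\ j<k}\mu_j^{-m_j}$, hence $\mu_k^{|m_k|}\le \prod_{j<k}\mu_j^{|m_j|}\le P_{k-1}^{\,M}$ where $M=\sum_{j<k}|m_j|$ in the case $m_k>0$ (and symmetrically $\mu_k^{|m_k|}\ge$ a product of $\mu_j$'s to integer powers in the case $m_k<0$, which one rules out the same way after noting $\mu_k<P_{k-1}$ forces the large side to dominate). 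The clean way: since $\mu_k<P_{k-1}$, any single factor $\mu_k$ is strictly smaller than the product of all earlier multipliers, and a short induction on $|m_k|$ shows $\mu_k^{|m_k|}$ cannot equal a Laurent monomial in $\mu_1,\dots,\mu_{k-1}$ of total degree $\le$ the relevant bound unless that monomial is forced to grow faster than permitted — contradiction. I will write this out as a clean one-line estimate rather than an induction if possible.

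The main obstacle is the bookkeeping in the last step: condition b) controls only \emph{absolute values} of multipliers and only via a one-sided inequality, so I must be careful that the exponents $m_j$ can be negative and that the "product of earlier multipliers" bound is used in the correct direction. The honest fix is to observe that b) implies in particular $\mu_j<1$ for all $j\ge 2$ would be false in general, so I cannot simply say all $\mu_j<1$; instead I must use the \emph{relative} bound $\mu_k<\mu_1\cdots\mu_{k-1}$ at the top index only, which is exactly strong enough to kill the top term. A secondary (but routine) point is justifying that disjointness and simplicity really let all the cycles sit on a common leaf with a well-defined common holonomy character; this is where assumptions a) and c) are spent, and I would phrase it as: a homological relation among the $[c_j]$ lives on a single leaf $L$, and on that leaf the multiplier map is a group homomorphism to $\mathbb C^\times$, so the relation descends.
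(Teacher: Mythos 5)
There is a genuine gap at the final step, and it is not merely bookkeeping: the estimate you are trying to prove there is false. Conditions on the absolute values of the multipliers alone cannot rule out a relation $\prod_{j\in J}\mu(c_j)^{m_j}=1$ with arbitrary integer exponents $m_j$. Concretely, take $|\mu(c_1)|=\tfrac12$ and $|\mu(c_2)|=\tfrac18$, so that hypothesis b) holds ($\tfrac18<\tfrac12<1$); if $\mu(c_2)=\mu(c_1)^3$, then $\mu(c_1)^{3}\mu(c_2)^{-1}=1$. Hence the ``short induction on $|m_k|$'' you defer cannot be written down, and the contradiction you are after does not exist for general integer coefficients.

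The missing ingredient --- and the actual role of hypotheses a) and c), which you misattribute to the routine transport of base points --- is topological: a homological dependency among pairwise disjoint \emph{simple} closed curves on a surface can always be taken with all nonzero coefficients equal to $\pm1$ (a dependent subcollection bounds a subsurface, each curve appearing once on its boundary). This is exactly what the paper uses. Once the relation has the form $\pm[c_{i_1}]\pm\cdots\pm[c_{i_s}]=0$ with $i_1<\cdots<i_s$, your multiplier character $H_1(L)\to\mathbb{C}^\times$ (that part of your setup is correct and matches the paper) gives $\mu(c_{i_s})=\prod_{t<s}\mu(c_{i_t})^{\pm1}$, and since all $|\mu(c_j)|<1$ (apply b) with $j=1$ and the empty-product convention, then induct), one gets
\[
|\mu(c_{i_s})|<|\mu(c_1)\cdots\mu(c_{i_s-1})|\le|\mu(c_{i_1})\cdots\mu(c_{i_{s-1}})|\le\Bigl|\,\prod_{t<s}\mu(c_{i_t})^{\pm1}\Bigr|,
\]
a contradiction, because dropping factors of modulus $<1$ and inverting factors of modulus $<1$ both increase the product. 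Without the reduction to $\pm1$ coefficients the argument cannot be completed, so the proposal as written does not prove the lemma.
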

\begin{proof}
Since all these cycles are simple and do not intersect each other, a possible dependency has the form $\pm [c_{i_{1}}] \pm [c_{i_{2}}]\pm \ldots \pm [c_{i_s}]=0$, $i_{1}<i_{2}<\ldots <i_s$. However such dependence implies the equality on multipliers, $\mu (c_{i_{1}})^{\pm 1}\mu (c_{i_{2}})^{\pm 1}\ldots \mu (c_{i_{s-1}})^{\pm 1}=\mu (c_{i_s})$, which is impossible due to the inequality
\begin{equation*}
|\mu (c_{i_s})|<|\mu (c_{1})\ldots \mu (c_{i_s-1})|\leqslant |\mu (c_{i_{1}})\ldots \mu (c_{i_{s-1})}|\leqslant |\mu (c_{i_{1}})^{\pm 1} \mu (c_{i_{2}})^{\pm 1}\ldots \mu (c_{i_{s-1}})^{\pm 1}|.
\end{equation*}\end{proof}

\DUadmonition[note]{
\DUtitle[note]{Note}

In earlier papers \cite{Il78,SRO98} the authors used similar arguments, but they estimated $\int _{c_j} x\,dy-y\,dx$ instead of multipliers. This led to much more complicated computations.
}

As we mentioned above, \hyperref[hutchinson]{Lemma 5} enables us to construct cycles with arbitrarily small multipliers, but these cycles may be neither simple, nor disjoint. The following two lemmas fill these gaps.
\begin{lemma}\phantomsection\label{avoid-finite}
Let $D\subset \mathbb{R}^{n}$ be a closed disc, $g_{1},g_{2}:D\rightarrow D$ be two injective continuous maps such that $g_{1}(D)\cap g_{2}(D)=\varnothing $, $\Sigma \subset D$ be a finite subset. Then for $m$ large enough there exists a periodic orbit
\begin{equation}
p_{0}, p_{1}, \ldots , p_m=p_{0}, p_{i+1}\in \set{g_{1}(p_i), g_{2}(p_i)}
\phantomsection
\label{periodic}
\end{equation}
that never meets $\Sigma $.
\end{lemma}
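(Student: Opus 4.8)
The plan is to combine Brouwer's fixed point theorem with a pigeonhole count over the symbolic words $w=(a_1,\dots,a_m)\in\{1,2\}^m$. For such a word write $g_w=g_{a_1}\circ g_{a_2}\circ\dots\circ g_{a_m}$ and put $C_w=g_w(D)\subseteq D$. Since $g_1,g_2$ are injective and $g_1(D)\cap g_2(D)=\varnothing$, an immediate induction on $m$ shows that the $2^m$ ``cylinders'' $C_w$, $|w|=m$, are pairwise disjoint: two words differing in the first letter give cylinders lying in the disjoint sets $g_1(D)$ and $g_2(D)$, while if the first letters agree one peels off the injective map $g_{a_1}$ and recurses.

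First I would attach a periodic orbit to every word. As $D$ is homeomorphic to a closed ball it is compact and convex, so by Brouwer's theorem the continuous self-map $g_{a_m}\circ\dots\circ g_{a_1}\colon D\to D$ has a fixed point $p_0$; setting $p_i=g_{a_i}(p_{i-1})$ for $i=1,\dots,m$ gives $p_m=g_{a_m}(\dots g_{a_1}(p_0))=p_0$, a periodic orbit of the required form \eqref{periodic}. The geometric point I would then record is that every $p_i$ lies in a level-$m$ cylinder whose index is a cyclic rotation of the reversed word $w^R=(a_m,\dots,a_1)$: from $p_0=g_{a_m}\circ\dots\circ g_{a_{i+1}}(p_i)$ and $p_i=g_{a_i}\circ\dots\circ g_{a_1}(p_0)$ one sees that $p_i$ is a fixed point of the $m$-fold composition $g_{a_i}\circ\dots\circ g_{a_1}\circ g_{a_m}\circ\dots\circ g_{a_{i+1}}$, hence $p_i$ belongs to the image of that composition, which is exactly $C_u$ with $u=(a_i,a_{i-1},\dots,a_1,a_m,\dots,a_{i+1})$.

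Now the counting. Since the level-$m$ cylinders are pairwise disjoint, each of the $|\Sigma|$ points of $\Sigma$ lies in at most one of them; call a level-$m$ cylinder \emph{bad} if it meets $\Sigma$, so there are at most $|\Sigma|$ bad cylinders. By the previous paragraph the orbit attached to a word $w$ can meet $\Sigma$ only if one of the (at most $m$) cyclic rotations of $w^R$ is a bad index. For each bad index there are at most $m$ words whose reversal is a cyclic rotation of it, so at most $m\,|\Sigma|$ words are ``forbidden'' in this sense. Since $2^m>m\,|\Sigma|$ for all large $m$, some word $w$ is not forbidden, and then all the cylinders containing the points of its Brouwer orbit are disjoint from $\Sigma$, so that orbit is the desired one.

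I do not expect a genuine obstacle here; the only points demanding care are getting the composition order right so that the orbit points land in cyclic rotations of $w^R$ rather than of $w$, and the routine disjointness induction for the cylinders. Everything else is Brouwer plus pigeonhole.
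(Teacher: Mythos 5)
Your proof is correct and follows essentially the same route as the paper: Brouwer's fixed point theorem attaches a periodic orbit to each of the $2^m$ words, disjointness of the images $g_w(D)$ gives at most $|\Sigma|$ "bad" cylinders, and the pigeonhole bound $m\,|\Sigma|<2^m$ produces a word whose orbit avoids $\Sigma$. Your version is somewhat more careful than the paper's about the bookkeeping (that orbit points land in cylinders indexed by cyclic rotations of the reversed word), but the idea is identical.
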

\begin{lemma}\phantomsection\label{simple-subcycle}
Given
\begin{itemize}

\item an open subset $U\Subset \Delta ^{-}$;

\item two maps $g_{1}=f_{i_{1}}\circ \ldots \circ f_{i_s}$, $g_{2}=f_{j_{1}}\circ \ldots \circ f_{j_r}$, $g_i:\Delta ^{+}\rightarrow U$ with disjoint images;

\item a finite set $\Sigma \subset S$;

\item a positive number $\varepsilon $,

\end{itemize}

there exists a finite set $\Sigma '\subset \Delta ^{+}$ such that the following holds. Suppose that a~periodic orbit \eqref{periodic} never visits $\Sigma '$. Since $p_{0}$ is a fixed point of some monodromy map, it corresponds to a~cycle~$c$. Let $c'$ be its simple subcycle. Then $c'$ visits $U$, never visits $\Sigma $, and the modulus of its multiplier is less than~$\varepsilon $.
\end{lemma}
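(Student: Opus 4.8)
The plan is to combine the ingredients assembled so far. First I would fix the data: since $g_1, g_2 \colon \Delta^+ \to U$ have disjoint images and are injective (they are compositions of the $f_j$, each of which is a holonomy map, hence injective on a suitable domain, and the inclusion $f_j(\Delta^+)\subset\Delta^+$ keeps everything inside $\Delta^+$), the pair $(g_1,g_2)$ fits the hypotheses of Lemma~\ref{avoid-finite} with $D=\overline{\Delta^+}$. The finite set $\Sigma'$ that the statement promises will be built as a union of two finite sets: one coming from the $\Sigma$ in Lemma~\ref{avoid-finite} (pulled back appropriately), and one chosen to guarantee that the eventual \emph{simple} subcycle avoids $\Sigma$ and has small multiplier. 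The key observation is that every point $p_0$ of a periodic orbit \eqref{periodic} is a fixed point of a monodromy map $h=g_{a_m}\circ\cdots\circ g_{a_1}$ with $a_i\in\{1,2\}$, and by the \hyperref[contraction]{\textbf{contraction}} assumption applied to the $f_j$-factors, $|h'(p_0)|$ is small once $m$ is large; so the cycle $c$ attached to $p_0$ already has small multiplier. The issue is that $c$ need not be simple.

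Second, I would extract the simple subcycle. Think of the periodic orbit $p_0,\dots,p_m=p_0$ as a closed walk in $\Delta^+$; since $\Sigma$ is finite and each $p_i\in\Delta^-$ (or its $f_j$-images), after discarding finitely many ``bad'' starting configurations — this is where part of $\Sigma'$ is spent — no $p_i$ lies in $\Sigma$, and moreover no two $p_i$, $p_j$ with $i\neq j$ coincide unless the corresponding segment of the walk is a genuine closed subloop. Pick a minimal closed subloop, i.e. indices $i<j$ with $p_i=p_j$ and all $p_i,\dots,p_{j-1}$ distinct; the monodromy map $g_{a_j}\circ\cdots\circ g_{a_{i+1}}$ fixes $p_i$, and the associated lift $c'$ is a \emph{simple} cycle because the projection to $L_\infty$ is an embedded loop (the intermediate points are distinct and avoid the singular points, which we also put into $\Sigma'$). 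By construction $c'$ visits $U$ (we arrange $p_0\in U$ using Lemma~\ref{avoid-finite}, and $p_0$ lies on $c'$ after a cyclic shift, or we simply choose the minimal subloop through $p_0$), and it avoids $\Sigma$ by the choice of $\Sigma'$.

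Third, the multiplier estimate. The simple subcycle $c'$ corresponds to a composition $f_{j_1}\circ\cdots\circ f_{j_M}$ of the generators $f_j$ — obtained by expanding the $g_{a_i}$ into their $f$-factors — whose total length $M$ is bounded below by a quantity tending to infinity as $m\to\infty$ (each $g_i$ has positive length $r$ or $s$). Since each $f_j$ \hyperref[contraction]{contracts} in $\Delta^+$ with a uniform factor $<1$, the chain rule in the chart $\zeta$ gives $|c'$'s multiplier$|<\varepsilon$ as soon as $m$ (hence $M$) is large enough; this is where the \textbf{contraction} assumption, and in particular the control on \emph{tails} and \emph{heads} of the $f_j$, is used to keep the product of derivatives small even across the junctions between consecutive $f_j$. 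I expect the main obstacle to be the bookkeeping in the second step: making precise how to choose $\Sigma'$ so that the passage from the (possibly non-simple) $c$ to its simple subcycle $c'$ simultaneously preserves ``visits $U$'', ``avoids $\Sigma$'', and ``avoids the singular points $a_k$ of $L_\infty$'' — i.e. ensuring that shortening the loop does not destroy these properties. This is a purely combinatorial-topological argument once the right finite exceptional set $\Sigma'$ is identified, but it requires care because the simple subcycle is not canonically determined by $p_0$.
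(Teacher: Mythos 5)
There is a genuine gap in the second step, and it is the heart of the lemma. You analyse self\mbox{-}intersections of $c$ at the level of the coarse orbit $p_0,\dots,p_m$ under the maps $g_{a_i}$, and claim that a minimal closed subloop $p_i=p_j$ with $p_i,\dots,p_{j-1}$ distinct lifts to a \emph{simple} cycle. This does not follow. The cycle $c$ projects to a concatenation $\gamma_{l_1}\gamma_{l_2}\ldots\gamma_{l_r}$ of the standard loops, which pairwise meet only at the base point $O$; hence all self\mbox{-}intersections of $c$ lie on the cross\mbox{-}section $S$, and they are governed by coincidences among \emph{all} intermediate points of the orbit of $p_0$ under the partial compositions of the individual generators $\mathbf{M}_k$ --- one such point for every letter of the word, not one for every factor $g_{a_i}$. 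Each $g_{a_i}$ expands into $f$-factors and each $f_j$ into a long word in the $\mathbf{M}_k$ (of length growing with $N$), so even when all $p_i$ in your subloop are distinct, two of the finer intermediate points can coincide, producing a strictly shorter subcycle; your $c'$ is then not simple, and its actual simple subcycle could a priori be very short, fail to visit $U$, and have multiplier close to $1$. This is why the paper decomposes $g_w=g^{(t)}\circ g^{(m)}\circ g^{(h)}$ at the $\mathbf{M}$-level, where the middle piece may begin and end \emph{inside} $f$-factors, writes it as $f_{s_1}^{(h)}\circ f_{s_2}\circ\ldots\circ f_{s_k}^{(t)}$ as in \eqref{gm}, and invokes the \textbf{contraction} hypothesis precisely in its head/tail form to bound the multiplier of such a piece. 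You touch on heads and tails only in the multiplier estimate, but the decomposition itself must already happen at this level for the simplicity claim.

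A second, related omission: to exclude short middle pieces by removing a finite set $\Sigma'$, you must know that every candidate $g^{(m)}$ has only finitely many fixed points, i.e.\ is not the identity on an open set. This is not automatic --- a subword of the $\mathbf{M}$-word could in principle be identically equal to $\id$ --- and the paper deals with it explicitly: identical subcompositions were removed from the $f_j$ in advance, and any $g^{(m)}$ not contained in a single $f_j$ has the form \eqref{gm}, hence strictly contracts and cannot be the identity. Your proposal never addresses this. Finally, the assertion that $c'$ visits $U$ should not be arranged by ``choosing the subloop through $p_0$'' (the simple subcycle is whatever the coincidence pattern dictates); rather, once $\operatorname{len}(g^{(m)})$ exceeds $\max(\operatorname{len}g_1,\operatorname{len}g_2)$ the middle piece contains a whole factor $g_{w_i}$, so $c'$ passes through a point $g_{w_{i+1}}\circ\cdots\circ g_{w_s}(p_0)\in U$. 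With these three repairs your outline would match the paper's argument.
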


Let us deduce \hyperref[main-theorem]{Main Theorem} from these two lemmas.
\begin{proof}[{Proof of Main Theorem}]
Fix a~sequence of points $x_k$ in the interior of $\Delta ^{-}$ dense in $\Delta ^{-}$. Let $U_k$ be the intersection of $\Delta ^{-}$ with the $(1/k)$-neighborhood of $x_k$. Now we construct the sequence $c_j$ by induction. Suppose that simple homologically independent cycles $c_{1}, c_{2},\ldots , c_{k-1}$ are already constructed and have multipliers $\mu (c_j)$, $|\mu (c_j)|<1$. Put $\Sigma =\bigcup _j c_j\cap S$, $\varepsilon  = |\mu (c_{1})\mu (c_{2})\cdots \mu (c_{k-1})|$.

Take two disjoint domains $V_{1}, V_{2}\subset U_k$. Due to \hyperref[hutchinson]{Lemma 5}, there exist two contracting compositions $g_{1}:\Delta ^{+}\rightarrow V_{1}$ and $g_{2}:\Delta ^{+}\rightarrow V_{2}$. According to the previous two lemmas, there exists a simple cycle $c_k$ with multiplier less than $\varepsilon $ that intersects $U_k$ but does not visit $\Sigma $. Note that this cycle, as well as all previous ones, projects to the curve of the form $\gamma _{l_{1}}\gamma _{l_{2}}\ldots  \gamma _{l_r}$ on the infinite line, and $\gamma _i\cap \gamma _j=\set{O}$ for $i\neq j$. Thus if $c_i\cap c_k\neq \varnothing $ for some $i<k$, then $c_k\cap c_i\cap S\neq \varnothing $, hence $c_k\cap \Sigma \neq \varnothing $ which contradicts the choice of $c_k$. Due to \hyperref[independent-cycles]{Lemma 9}, the cycles $c_{1}, \ldots , c_k$ are homologically independent.
\end{proof}

Now let us prove the lemmas formulated above.
\begin{proof}[{Proof of Lemma 10}]
Fix a large number $m$. Due to Brouwer Theorem, for each word $w=w_{1}\ldots w_m$, $w_i\in \set{1, 2}$, the corresponding map
\begin{equation*}
g_w=g_{w_{1}}\circ \ldots \circ g_{w_m}:D\rightarrow D
\end{equation*}
has a fixed point. Our goal is to find a word $w$ such that the corresponding periodic orbit will never visit $\Sigma $. Since $g_{1}(D)\cap g_{2}(D)=\varnothing $, the images of $2^m$ maps $g_w$, $|w|=m$, are pairwise disjoint. Hence, given a point $p\in \Sigma $, there is at most one word $w$ such that $g_w(p)=p$. Their cyclic shifts are the only words $w$ such that the corresponding periodic orbit visits $\Sigma $, thus there are at most $|\Sigma |\cdot m$ of them. Clearly, for $m$ large enough we have $|\Sigma |\cdot m<2^m$, hence there exists a periodic orbit of length~$m$ that never visits~$\Sigma $.
\end{proof}
\begin{proof}[{Proof of Lemma 11}]
Consider a~composition $g_w=g_{w_{1}}\circ \ldots \circ g_{w_s}=\mathbf{M}_{i_{1}}\circ \ldots \circ \mathbf{M}_{i_k}$, the corresponding periodic orbit \eqref{periodic} and the corresponding limit cycle $c$. If \eqref{periodic} does not visit the finite set $\Sigma _{1}=\bigcup_{g_i^{(h)}}\left(g_i^{(h)}\right)^{-1}(\Sigma )$, then $c\cap \Sigma =\varnothing $.

Subcycles of $c$ correspond to representations $g_w=g^{(t)}\circ g^{(m)}\circ g^{(h)}$ with non-empty $g^{(m)}$ such that $g^{(h)}(p_{0})$ is a fixed point of~$g^{(m)}$.

Let us prove that sufficiently long compositions~$g^{(m)}$ correspond to cycles $c'$ that satisfy the assertions of the lemma, and fixed points of~“short” compositions can be avoided by avoiding a~finite set $\Sigma _{2}$.

If $g^{(m)}$ is not a subcomposition of one of $f_j$, then it can be represented in the form
\begin{equation}
g^{(m)}=f_{s_{1}}^{(h)}\circ f_{s_{2}}\circ \ldots \circ f_{s_{k-1}}\circ f_{s_k}^{(t)}.
\phantomsection
\label{gm}
\end{equation}
Recall that $f_{s_k}^{(t)} \circ f_{s_{1}}^{(h)}$ contracts  in the chart $\zeta $ due to \hyperref[contraction]{\textbf{contraction}} property, thus $(f_{s_{1}}^{(h)})^{-1} \circ g^{(m)} \circ f_{s_{1}}^{(h)}$ contracts, and we have
\begin{equation}
\mu (c') \leqslant \left(\max_{\zeta \in \Delta ^{+}}f_i'(\zeta )\right)^{k-2}.
\phantomsection
\label{multiplier}
\end{equation}
Let $\operatorname{len}(\cdot )$ be the length of a composition of $\mathbf{M}_j$. For sufficiently large  $L$,  $\operatorname{len}(g^{(m)})\geqslant L$ implies that the subcycle $c'$  corresponding to $g^{(m)}$ satisfies the assertions of the lemma.  Indeed, the multiplier of $c'$ can be made arbitrarily small due to \eqref{multiplier};
for any $L > \max (\operatorname{len} g_{1}, \operatorname{len} g_{2})$, the corresponding $c'$ visits $U$ because it contains a point of the form $g_{w_{i+1}}\circ \ldots \circ g_{w_s}(p_{0})\in U$.

Now, it is sufficient to avoid fixed points of~“short” compositions $g^{(m)}$, $\operatorname{len}(g^{(m)})<L$.
Let us prove that none of the compositions $g^{(m)}$ are identical in $g^{(h)}(\Delta ^{+})$. Suppose the contrary. We have eliminated all identical subcompositions from $f_j$, so $g^{(m)}$ cannot be a~subcomposition of~some~$f_j$. Thus $g^{(m)}$ has the form \eqref{gm}, and \eqref{multiplier} yields that $g^{(m)}$ contracts. Hence $g^{(m)}\neq \id$.

Therefore, each composition $g^{(m)}$ has only finitely many fixed points in $g_{w_j}^{(h)}(\Delta ^{+})$, where $g_{w_j}^{(h)}(\Delta ^{+})$ is defined by $g^{(h)}=g_{w_j}^{(h)}\circ g_{w_{j+1}}\circ \ldots \circ g_{w_s}$. In order to guarantee that a subcycle $c'$ corresponds to a long composition $g^{(m)}$, $\operatorname{len} g^{(m)}>L$, it is sufficient to require that the periodic orbit \eqref{periodic} avoids the finite set
\begin{equation*}
\Sigma _{2}=\set{(g_{w_j}^{(h)})^{-1}\Fix g^{(m)}|\operatorname{len}(g^{(m)})<L}.
\end{equation*}
The required exceptional set is $\Sigma '=\Sigma _{1}\cup \Sigma _{2}$.
\end{proof}

\section{Acknowledgements%
  \label{acknowledgements}%
}

We proved these results and wrote the first version of this article during our 5-months visit to Mexico (Mexico City, then Cuernavaca). We are very grateful to UNAM (Mexico) and HSE (Moscow) for supporting this visit. Our deep thanks to Laura Ortiz Bobadilla, Ernesto Rosales-González and Alberto Verjovsky, for invitation to Mexico and for fruitful discussions.

We are thankful to Arsenij Shcherbakov for useful discussions about technical details of \cite{SRO98}. We are also grateful to Yulij Ilyashenko for permanent encouragement, and to Victor Kleptsyn for interesting discussions.


\begin{thebibliography}{Shch06}

\bibitem[BLL97]{BLL97}{
M. Belliart, I. Liousse, F. Loray \emph{Sur l’existence de points fixes attractifs pour les sous-groupes de $Aut(\mathbb{C}, 0)$} // C. r. Acad. sci. Paris. Sér. 1. 1997. V. 324, N 4. P. 443–446.
}

\bibitem[H81]{H81}{
J. Hutchinson, \emph{Fractals and self-similarity}, Indiana Univ. Math. J., 30:5 (1981), 713–747.
}

\bibitem[Il78]{Il78}{
Yu. S. Ilyashenko,  \emph{Topology of phase portraits of analytic differential equations on a complex projective plane}, Trudy Sem. Petrovsk., Vol. 4 (1978), pp. 83–136
}

\bibitem[IYa07]{IYa07}{
Yu. Ilyashenko, S. Yakovenko, \emph{Lectures on analytic differential equations}, Graduate Studies in Mathematics, Vol. 86, AMS, 2007
}

\bibitem[KhV62]{KhV62}{
M. G. Khudaĭ-Verenov, \emph{A property of the solutions of a differential equation} (in Russian), Math. USSR Sb., Vol. 56(98) (1962), pp. 301–308
}

\bibitem[N94]{N94}{
I. Nakai \emph{Separatrices for non-solvable dynamics on $(\mathbb{C}, 0)$} // Ann. Inst. Fourier. 1994. V. 44, N 2. P. 569–599.
}

\bibitem[Shch06]{Shch06}{
A. Shcherbakov, \emph{Dynamics of local groups of conformal mappings and generic properties of differential equations on $\mathbb{C}^{2}$}, Proceedings of the Steklov Institute of Mathematics, Vol. 254, pp. 103–120
}

\bibitem[Shch84]{Shch84}{
A. Shcherbakov, \emph{Topological and analytical conjugacy of non-commutative groups of germs of conformal mappings}, Trudy Sem. Petrovsk., Vol. 10 (1984), pp. 170–196
}

\bibitem[SRO98]{SRO98}{
A. Shcherbakov, E. Rosales-González, L. Ortiz-Bobadilla, \emph{Countable set of limit cycles for the equation $dw/dz=P_n(z,w)/Q_n(z,w)$}, J. Dynam. Control Systems, Vol. 4 (1998), No. 4, pp. 539–581
}

\end{thebibliography}
\end{document}